\setlist[enumerate]{leftmargin=*}
\numberwithin{equation}{section}
\theoremstyle{definition}
\newtheorem{example}{Example}
\theoremstyle{remark}
\newtheorem{remark}{Remark}
\theoremstyle{plain}
\newtheorem{proposition}{Proposition}
\newtheorem{lemma}{Lemma}
\newtheorem{corollary}{Corollary}
\newtheorem{theorem}{Theorem}
\def\CC{\mathbb{C}}
\def\ZZ{\mathbb{Z}}
\def\FF{\mathsf{F}}
\def\wiggle{\mathsf{Wiggle}}
\def\unipotents{\mathsf{Uni}}
\DeclareMathOperator{\GL}{GL}
\DeclareMathOperator{\Mat}{Mat}
\DeclareMathOperator{\Diag}{Diag}
\DeclareMathOperator{\Torus}{Torus}
\DeclareMathOperator{\SL}{SL}
\DeclareMathOperator{\PSL}{PSL}
\DeclareMathOperator{\tr}{tr}
\DeclareMathOperator{\image}{im}
\DeclareMathOperator{\res}{Res}
\title{On surjectivity of word maps on $\PSL_2$}
\author{Urban Jezernik}
\address{Urban Jezernik, 
Alfréd Rényi Institute of Mathematics, 
Hungarian Academy of Sciences, 
Reáltanoda utca 13-15, 
H-1053, Budapest, 
Hungary}
\email{jezernik.urban@renyi.hu}
\author{Jonatan Sánchez}
\address{Jonatan S\'anchez,
Department of Applied Mathematics (DMATIC), ETSI Ingenieros Informáticos,
Universidad Polit\'ecnica de Madrid,
Campus de Montegancedo, Avenida de Montepríncipe,
28660, Boadilla del Monte,
Spain
}
\email{jonatan.sanchez@upm.es}
\thanks{The first author has received funding from 
the European Union’s Horizon 2020 research and innovation programme
under the Marie Sklodowska-Curie grant agreement No. 748129,
as well as funding from the European Research Council (ERC) 
under the European Union’s Horizon 2020 research and innovation programme 
(grant agreement No. 741420).
The second author is partially supported by Universidad Polit\'ecnica de Madrid (UPM)}
\date{\today}
\begin{document}
\baselineskip=15pt

\begin{abstract}
Let $w = [[x^k, y^l], [x^m, y^n]]$ be a non-trivial
double commutator word. We show that $w$ is surjective on
$\PSL_2(K)$, where $K$ is an algebraically closed field of
characteristic $0$.
\end{abstract}

\maketitle

\section{Introduction}

\subsection{Words, word maps and their surjectivity}
A \emph{word in two variables} $w$ is an element of the free group 
$\FF_2 = \langle x, y \rangle$. Given a group $G$, the word $w$ induces a
\emph{word map} $\tilde w$ on $G$ by evalution,
\[
\tilde w \colon G \times G \to G, \quad (g,h) \mapsto w(g,h).
\]

When the underlying group $G$ is a connected semisimple algebraic group, 
say $\SL_n(K)$ for an algebraically closed field $K$, 
every non-trivial word map is dominant by a theorem of Borel \cite{borel1983free}.

For certain words, one can even prove surjectivity and possibly further properties
of the word map, say flatness.
All of these can then be used to descend to the case of finite simple groups, 
say $\PSL_n(\mathbb{F}_q)$,
and deduce surjectivity or even uniform distribution there.
See \cite{larsen2019probabilistic,larsen2019most} 
for a recent application of this technique.

Despite being dominant, not all word maps are surjective on linear algebraic groups, 
for example taking powers on $\SL_2(\CC)$ is not always surjective. 
The situation is different for adjoint groups -- 
the surjectivity problem asking whether or not all word maps are surjective
is still open for the case of two variable word maps on $\PSL_2(K)$,
where $K$ is algebraically closed of characteristic $0$. It is this problem
that we address in the present paper.

It was shown by Bandman and Zarhin \cite{bandman2016surjectivity},
and later reproved by Gordeev, Kunyavski{\u\i} and Plotkin \cite{gordeev2018word},
that the surjectivity problem for $\PSL_2(K)$ has a positive solution for words $w$
{\em not} belonging to the second derived subgroup $\FF_2^{(2)}$ of $\FF_2$.
Their results furthermore imply that for any word $w \in \FF_2$,
the image $\image \tilde w$ contains all semisimple
elements of $\PSL_2(K)$. As $\image \tilde w$ is closed for conjugation,
the surjectivity problem is then reduced to finding a single 
non-trivial unipotent in $\image \tilde w$ for $w \in \FF_2^{(2)}$.
This has been shown to hold for a handful of concrete words, and recently Gnutov and Gordeev \cite{gnutov2020recursive} showed that each such example $w \in \FF_2^{(2)}$ can be used to produce a sequence of words $\{ w_i \}_{i \geq 2}$ with $w_i \in \FF_2^{(i)}$ so that $\tilde w_i$ is also surjective. Here $\FF_2^{(i)}$ is the $i$-th term of the derived series of $\FF_2$.


\subsection{Main contribution and strategy of proof}

The second derived subgroup $\FF_2^{(2)}$ is generated as a normal subgroup of
$\FF_2$ by the {\em double commutators}
\[
[[x^k, y^l], [x^m, y^n]]
\quad
\text{with }
k,l,m,n \in \ZZ.
\]
Here $[x,y]$ stands for $x^{-1} y^{-1} x y$. 
We show that the surjectivity problem for $\PSL_2(K)$
has a positive solution for these generating words.

\begin{theorem} \label{theorem:double_commutators_are_surjective}
Every non-trivial double commutator induces a surjective word map on $\PSL_2(K)$,
where $K$ is an algebraically closed field of characteristic $0$.
\end{theorem}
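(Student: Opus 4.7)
By the cited results of Bandman--Zarhin and Gordeev--Kunyavski{\u\i}--Plotkin, $\image \tilde w$ contains every semisimple element of $\PSL_2(K)$; since the image is closed under conjugation and the only remaining conjugacy classes in $\PSL_2(K)$ are $\{I\}$ (attained by $g = h = I$) and the unique class of non-identity unipotents, the theorem is equivalent to exhibiting a single pair $(g, h) \in \SL_2(K)^2$ for which $w(g,h)$ is a non-identity unipotent.

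\textbf{Strategy: put the inner commutators in a common Borel.} Set $A := [g^k, h^l]$ and $B := [g^m, h^n]$, so that $w(g, h) = [A, B]$. The key observation is that if $A$ and $B$ share a common fixed point on $\PP^1(K)$, they lie in a common Borel subgroup $P$ of $\SL_2(K)$; consequently $[A, B] \in [P, P]$ is automatically unipotent. Moreover, $[A, B] = I$ precisely when $A$ and $B$ commute, which for semisimple $A, B$ means they share \emph{both} fixed points on $\PP^1$. The plan is therefore to parametrize a family of pairs $(g, h)$ forcing $A$ and $B$ to share exactly one fixed point, and then verify that along this family $A$ and $B$ are generically non-commuting.

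\textbf{Implementation.} Concretely, I would take $g = \diag(a, a^{-1})$ so that $g^j = \diag(a^j, a^{-j})$ acts diagonally, and parametrize $h$ by its four matrix entries (constrained by $\det h = 1$). The powers $h^l$ and $h^n$ are written via Chebyshev-type recursions, and the $(2, 1)$-entries of $A$ and $B$ become explicit polynomials in $a$ and the entries of $h$, whose coefficients involve the quantum integers $[j]_a := (a^j - a^{-j})/(a - a^{-1})$ for $j \in \{k, l, m, n\}$. Imposing that both $(2, 1)$-entries vanish realizes the common-Borel condition and cuts out a codimension-two subvariety of the $(g, h)$-parameter space. On this subvariety, the $(1, 2)$-entry of $w(g, h) = [A, B]$ is a further explicit polynomial, and the proof reduces to showing that this entry does not vanish identically.

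\textbf{Main obstacle.} The crux is proving this non-vanishing uniformly in the tuple $(k, l, m, n)$: the polynomial depends intricately on these four integers through the quantum integers $[j]_a$, and one has to rule out arithmetic coincidences — in particular the degenerate tuples (such as $(k, l) = (m, n)$) for which $w \equiv 1$ in $\FF_2$ and which are excluded by hypothesis. The technical heart of the argument is therefore a polynomial non-vanishing statement, to be established by a case analysis on $(k, l, m, n)$ together with identities among the quantum integers $[j]_a$, and leveraging exactly where the hypothesis "$w$ non-trivial in $\FF_2$" enters to forbid the pathological specializations.
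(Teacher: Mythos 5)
Your opening reduction is correct: since semisimple elements are already known to lie in the image, it suffices to exhibit a single non-trivial unipotent in $\image\tilde w$, and your observation that $[A,B]$ is unipotent precisely when $A$ and $B$ lie in a common Borel (equivalently, $\tr[A,B]=2$ by Fricke) is the right way to think about what the trace condition means geometrically. That is essentially the same reduction the paper performs with its wiggle map: they also fix one variable diagonal and a second one a generic semisimple element, and the condition they impose, namely $\tau(t)=0$, is exactly $\tr[A,B]=2$, i.e.\ the codimension-one condition that $A$ and $B$ share \emph{some} fixed point.

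The gap is in the implementation. You fix $g=\diag(a,a^{-1})$ and then impose that the $(2,1)$-entries of both $A=[g^k,h^l]$ and $B=[g^m,h^n]$ vanish; this is a codimension-two condition that forces the common fixed point to be $\infty$, which you cannot arrange in general once $g$ has been normalized to be diagonal (the only conjugations preserving the diagonal torus are diagonal ones and $\sigma$, and these can only move a fixed point to $0$ or $\infty$ if it was already there). Concretely, for $(k,l,m,n)=(1,1,2,2)$ one computes, with $h=\begin{psmallmatrix}p&q\\ r&s\end{psmallmatrix}$ and generic $a$, that $A^2_1 = pr(1-a^2)$ and $B^2_1 = (p^2+qr)\,r\,(p+s)(1-a^4)$; the locus where both vanish has exactly two components, $\{r=0\}$ and $\{p=s=0\}$. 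On $\{r=0\}$ both $A$ and $B$ are upper unitriangular, hence commute; on $\{p=s=0\}$ one has $h^2=-\mathbb{1}$, so $B=\mathbb{1}$. In either case $[A,B]=\mathbb{1}$, so the $(1,2)$-entry \emph{does} vanish identically on your subvariety and your non-vanishing claim fails for this word, even though $[[x,y],[x^2,y^2]]$ is a non-exceptional, non-trivial double commutator. You would need to relax to the codimension-one condition that $A$ and $B$ merely share an unspecified fixed point (the resultant of their fixed-point quadratics), which is exactly the trace-equals-two locus the paper works with via $\tau(t)$. Finally, even setting this aside, the ``main obstacle'' paragraph only describes what would need to be proved rather than proving it; the paper carries out that non-vanishing argument by an explicit resultant computation $\res(\tau,\gamma)$ and a three-case analysis of the tuples $(k,l,m,n)$ for which it degenerates.
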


The method of the proof works for general words and is restricted to double commutators
only in its final stage. Our strategy is inspired by that of \cite{gordeev2018word},
where the authors show how the existence of unipotents in $\image \tilde w$
is related to the geometric structure of the representation variety
of the $1$-relator group $\FF_2 / \langle w \rangle$.
The required computations to extract this geometric structure 
are made easier if one is able to pass to the quotient variety by the action of $\SL_2(\CC)$.
In our approach, we work directly inside the group $\SL_2(K)$.
Note that it suffices to show the existence of a non-trivial unipotent
in $\image \tilde w$ for this group.
We focus on the Zariski open subset of semisimple pairs in $G$
and consider its image in the quotient by $\SL_2(K)$.
We call this procedure {\em wiggling} (see Section \ref{sec:wiggling}).
Our ultimate aim is to show that one can find diagonal matrices $x,y \in \SL_2(K)$
for which the image of the wiggle map contains a non-trivial unipotent.

\begin{example}
Let 
$x = \begin{psmallmatrix} \lambda & 0 \\ 0 & \lambda^{-1} \end{psmallmatrix}$
and
$y = \begin{psmallmatrix} \mu & 0 \\ 0 & \mu^{-1} \end{psmallmatrix}$
be a pair of diagonal matrices in $\SL_2(K)$. 
The wiggle map associated to the ordinary commutator word is
\[
\wiggle^{x,y} \colon \SL_2(K) \to \SL_2(K), \quad g \mapsto [x, y^g].
\]
One directly computes that, for
$g = \begin{psmallmatrix} a & b \\ c & d \end{psmallmatrix}$,
we have
\[
\tr \wiggle^{x,y}(g)
=
\left(\lambda - \lambda^{-1}\right)^2 \left(\mu - \mu^{-1}\right)^2 
bc \left( bc + 1 \right)
+ 2.
\]
As long as $\lambda, \mu \neq \pm 1$, the trace is equal to $2$ if and only if
$bc \in \{ 0, -1 \}$.
On the other hand,
the $(1,2)$-entry of the image of $g$ under the wiggle map is
\[
\wiggle^{x,y}(g)^2_1 =
\left(\lambda - \lambda^{-1}\right)^2 \left(\mu - \mu^{-1}\right)^2
\left( 
1 + \left( 1 - \mu^2 \right) bc
\right)
bd.
\]
As long as $\lambda, \mu \neq \pm 1$, this entry is equal to $0$ if and only if
$bd = 0$ or $bc = - 1/(1 - \mu^2)$.
Taking $\lambda = \mu = 2$ and 
$g = \begin{psmallmatrix} 0 & 1 \\ -1 & 1 \end{psmallmatrix}$,
we see that the image of the wiggle map contains a non-identity
element of trace $2$, which must be a non-trivial unipotent.
\end{example}

We first show that the wiggle map associated to any word 
can be expressed in a normal form in terms of
two matrix polynomials in the variable $bc$ as in the example above.
We also explain the meaning of the coefficients of these polynomials.
We then give an effective way of computing these polynomials.
Furthermore, we show that the wiggle map has a particularly symmetric form.
This can be exploited (see Section \ref{sec:finding_unipotents}) 
to reduce the problem of finding non-trivial unipotents
to the problem of finding a particular root of a single polynomial,
similar to the one in the example above.
This polynomial is related to the trace polynomial 
as developed in general 
by Fricke \cite{fricke1897vorlesungen}.
For the purposes of this paper, however, additional
information regarding the structure of this polynomial
in needed and we achieve this with the matrix form of the 
wiggle map. 
We conclude by applying all of the above to the case of 
double commutators, 
where we are able to explicitly compute the relevant polynomial.

Some of the symbolic calculations were verified using {\sf Wolfram Mathematica}, the relevant notebook is available on the website of the first author.\footnote{\tiny\url{https://drive.google.com/file/d/15AKylXoKmwW4hsnSPhv5GvUulRSkFZPQ/view?usp=sharing}}

\subsection{Notation}

We will write $\Mat_2(K)$ for the matrix algebra and
$\Diag_2(K)$ for the set of all diagonal matrices in $\Mat_2(K)$. Then
$\Torus_2(K) = \SL_2(K) \cap \Diag_2(K)$ is the standard maximal torus in $\SL_2(K)$.
We will use the same notation for more general matrix rings $\Mat_2(R)$ over a ring
$R$. For a matrix $A \in \Mat_2(R)$, we will let $A^j_i$ denote the element of $A$
at position $(i,j)$.

\section{Conjugating semisimple elements}

\subsection{Conjugation map}

Our objective will be to evaluate the word map $\tilde w$ at a pair of semisimple 
elements in a matrix group $G \subseteq \GL_2(K)$.  
To achieve this, we will transform occurrences of diagonalizable 
matrices with their diagonal forms under conjugation by some element $g \in G$. 
In order to control the situation, we therefore study the linear map
\[
C_g \colon \Mat_2(K) \to \Mat_2(K), \quad x \mapsto x^g - x.
\]
Here and throughout, we denote $x^g = g^{-1} x g$.

\subsection{Conjugation map on diagonal matrices}

Set $\xi_g = C_g(
\left(\begin{smallmatrix} 1 & 0 \\ 0 & 0\end{smallmatrix}\right)
)$.
The image of a diagonal matrix 
$\left(\begin{smallmatrix} \lambda & 0 \\ 0 & \mu \end{smallmatrix}\right)$
under $C_g$ can be expressed in terms of the matrix $\xi_g$ as follows:
\begin{equation}\label{eq:Cgx}
C_g(\left(\begin{smallmatrix} \lambda & 0 \\ 0 & \mu \end{smallmatrix}\right))
=
C_g(\left(\begin{smallmatrix} \lambda & 0 \\ 0 & 0 \end{smallmatrix}\right))
+ 
C_g(\left(\begin{smallmatrix} 0 & 0 \\ 0 & \mu \end{smallmatrix}\right)
-
\left(\begin{smallmatrix} \mu & 0 \\ 0 & \mu \end{smallmatrix}\right))
=
(\lambda - \mu) \xi_g.
\end{equation}
This means that a conjugate of a diagonal matrix can be expressed as
\begin{equation}
\left(\begin{smallmatrix} \lambda & 0 \\ 0 & \mu \end{smallmatrix}\right)^g
=
\left(\begin{smallmatrix} \lambda & 0 \\ 0 & \mu \end{smallmatrix}\right)
+
(\lambda - \mu) \xi_g.
\end{equation}
We will write this more compactly; for a diagonal matrix 
$x = \left(\begin{smallmatrix} \lambda & 0 \\ 0 & \mu \end{smallmatrix}\right)$,
let $\bar{x} = \lambda - \mu$. Then
\begin{equation} \label{eq:conj}
x^g = x + \bar{x} \xi_g.
\end{equation}


\subsection{Properties of the conjugation map}

Let us collect some properties of the map $C_g$ and of the matrix $\xi_g$.
The identity matrix of $\GL_2(K)$ will be denoted by $\mathbb{1}$ throughout.


\begin{lemma} \label{lemma:xiproperties}
Let $x, y \in \Mat_2(K)$ and $g, h \in \GL_2(K)$.
\begin{enumerate}
    \item $\tr C_g(x) = 0$
    \item $C_g(x)^2 = - \det C_g(x) \cdot \mathbb{1}$ 
    \item $C_{g^h}( x^h ) = C_g( x )^h$
    \item $C_g( xy ) - C_g(x) C_g(y) = x C_g(y) + C_g(x) y$
\end{enumerate}
\end{lemma}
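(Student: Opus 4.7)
All four claims follow by direct calculation from the definition $C_g(x) = g^{-1}xg - x$, so the plan is essentially bookkeeping; none of the four is the true ``obstacle,'' but (2) and (4) deserve slightly more care than (1) and (3).

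For (1), I would simply note that trace is conjugation-invariant, so $\tr(g^{-1}xg) = \tr(x)$, giving $\tr C_g(x) = 0$ at once. For (2), the key observation is that $C_g(x)$ is a $2\times 2$ matrix of trace zero (by (1)), so the Cayley--Hamilton identity $A^2 - \tr(A)\,A + \det(A)\,\mathbb{1} = 0$ collapses to $C_g(x)^2 = -\det C_g(x) \cdot \mathbb{1}$. I would state this in one line, referencing (1).

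For (3), the plan is to unfold both sides. Using $x^h = h^{-1}xh$ and $g^h = h^{-1}gh$, compute
\[
C_{g^h}(x^h) = (h^{-1}gh)^{-1}(h^{-1}xh)(h^{-1}gh) - h^{-1}xh = h^{-1}(g^{-1}xg - x)h = C_g(x)^h,
\]
where the internal $h h^{-1}$ factors telescope. No cleverness is required; this is purely associativity of matrix multiplication. For (4), I would expand both sides:
\[
C_g(xy) - C_g(x)C_g(y) = (x^g y^g - xy) - (x^g - x)(y^g - y)
\]
and, after multiplying out the right-hand factor, the cross terms leave $x^g y + x y^g - 2xy$. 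Comparing with
\[
xC_g(y) + C_g(x)y = x(y^g - y) + (x^g - x)y = xy^g + x^g y - 2xy
\]
gives equality. This is the only step with any combinatorial bulk, but it is a one-line expansion once written out carefully.

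In sum, the proof is a short computation: (1) trace invariance, (2) Cayley--Hamilton plus (1), (3) associativity, (4) expansion of a Leibniz-type identity for the ``derivation'' $C_g$. The mildest pitfall is keeping track of signs in (4); everything else is mechanical.
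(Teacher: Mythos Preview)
Your proposal is correct and matches the paper's own proof essentially line for line: (1) by conjugation-invariance of trace, (2) by Cayley--Hamilton combined with (1), (3) by unfolding the definitions, and (4) by expanding both sides to $x^g y + x y^g - 2xy$. There is nothing to add or change.
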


\begin{proof}
    (1) $\tr C_g(x) = \tr x^g - \tr x = 0$.
    (2) Immediate from the Cayley-Hamilton theorem.
    (3) $C_{g^h}(x^h) = x^{h g^h} - x^h = x^{gh} - x^h = C_g(x)^h$.
    (4) Expand
    \[
        C_g(xy) - C_g(x) C_g(y) 
        = (xy)^g - xy - (x^g - x)(y^g - y) 
        = x^g y + x y^g - 2 xy,
    \]
    and notice that the latter is precisely $C_g(x)y + xC_g(y)$.
\end{proof}

It follows from the lemma that $\tr \xi_g = 0$ and $\xi_g^2 = - \det \xi_g \cdot \mathbb{1}$.
We now collect some more properties of the matrix $\xi_g$ that will be useful in the rest of the paper.

\begin{lemma}\label{lemma:xiproperties2}
Let $x \in \Diag_2(K)$.
\begin{enumerate}
    \item $\tr \left( x\xi_g \right) = \bar{x} \det \xi_g$
    \item If $x \in \Torus_2(K)$, then $\xi_g x = x^{-1} \xi_g + \bar{x} \det \xi_g \cdot \mathbb{1}$.
\end{enumerate}
\end{lemma}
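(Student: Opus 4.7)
My plan is to derive both assertions from a single structural identity for $\xi_g$ coming from the idempotency of the coordinate projection $e = \left(\begin{smallmatrix} 1 & 0 \\ 0 & 0\end{smallmatrix}\right)$, for which $\xi_g = C_g(e)$ by definition.

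First I would observe that $e^2 = e$, and apply Lemma~\ref{lemma:xiproperties}(4) with $x = y = e$ to get $\xi_g = \xi_g^2 + e\,\xi_g + \xi_g\,e$. Substituting $\xi_g^2 = -\det \xi_g \cdot \mathbb{1}$ from Lemma~\ref{lemma:xiproperties}(2) rearranges this to the \emph{idempotent identity}
\[
e\,\xi_g + \xi_g\,e \;=\; \xi_g + \det \xi_g \cdot \mathbb{1}.
\]
Taking the trace of both sides and using $\tr \xi_g = 0$ together with cyclicity of the trace immediately yields the convenient corollary $\tr(e\,\xi_g) = \det \xi_g$.

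For part (1): for $x = \left(\begin{smallmatrix} \lambda & 0 \\ 0 & \mu\end{smallmatrix}\right) \in \Diag_2(K)$, I would decompose $x = \mu\,\mathbb{1} + \bar{x}\,e$. Linearity of the trace combined with $\tr \xi_g = 0$ reduces $\tr(x\,\xi_g)$ to $\bar{x}\,\tr(e\,\xi_g)$, and the corollary finishes the calculation. For part (2): for $x = \left(\begin{smallmatrix} \lambda & 0 \\ 0 & \lambda^{-1}\end{smallmatrix}\right) \in \Torus_2(K)$, the analogous decomposition reads $x = \lambda\,e + \lambda^{-1}(\mathbb{1} - e)$ and $x^{-1} = \lambda^{-1} e + \lambda (\mathbb{1} - e)$. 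Direct multiplication then produces
\[
\xi_g\,x \;=\; \lambda^{-1}\,\xi_g + \bar{x}\,\xi_g\,e, \qquad x^{-1}\,\xi_g \;=\; \lambda\,\xi_g - \bar{x}\,e\,\xi_g,
\]
and subtracting gives $\xi_g\,x - x^{-1}\,\xi_g = -\bar{x}\,\xi_g + \bar{x}(e\,\xi_g + \xi_g\,e)$. Invoking the idempotent identity collapses this to $\bar{x}\,\det \xi_g \cdot \mathbb{1}$, as required.

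The only non-routine step is noticing that $e^2 = e$ produces the idempotent identity via Lemma~\ref{lemma:xiproperties}(4); once that is in hand, everything else is linear bookkeeping that needs no input from $\SL_2$ beyond Lemma~\ref{lemma:xiproperties}.
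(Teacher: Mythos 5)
Your proof is correct, and it takes a genuinely different route than the paper. The paper applies Lemma~\ref{lemma:xiproperties}(4) directly to the products $x \cdot x$ (for part~(1)) and $x \cdot x^{-1}$ (for part~(2)) with $x$ the given diagonal or torus element, then invokes the defining relation $x^g = x + \bar{x}\,\xi_g$ to identify the various $C_g$-terms; this forces a small case split on whether $\bar{x} = 0$ in part~(1). You instead apply Lemma~\ref{lemma:xiproperties}(4) exactly once, to the idempotent $e$ that defines $\xi_g$, obtaining the master identity $e\,\xi_g + \xi_g\,e = \xi_g + \det\xi_g \cdot \mathbb{1}$, and then extract both parts by decomposing an arbitrary diagonal matrix as an affine combination of $\mathbb{1}$ and $e$. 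The upshot of your approach is a cleaner logical structure: one anticommutator-type identity drives everything, the case split disappears, and the bookkeeping becomes purely linear. The paper's version is more direct in the sense that it stays close to the form in which $\xi_g$ actually appears later (through $C_g$ of a general diagonal element), but your derivation better isolates the structural fact that makes the lemma work.
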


\begin{proof}
(1) When $\bar{x} = 0$, we have $x = \pm \mathbb{1}$ and the claim holds.
Assume now that $\bar{x} \neq 0$. 
Use Lemma \ref{lemma:xiproperties} (4) with $y = x$ to obtain
$C_g(x)^2 = - xC_g(x) - C_g(x)x + C_g(x^2)$. It follows from 
Lemma \ref{lemma:xiproperties} (2) together with \eqref{eq:conj} that
$C_g(x)^2 = - \bar{x}^2 \det \xi_g \cdot \mathbb{1}$.
We now apply the trace map to obtain
\[
- 2 \bar{x}^2 \det \xi_g = \tr C_g(x)^2 = 
- \tr x C_g(x) - \tr C(g) x + \tr C_g(x^2) = - 2 \tr x C_g(x).
\]
Note that $x C_g(x) = \bar{x} x \xi_g$ by \eqref{eq:conj}. This implies 
$\tr x \xi_g = \bar{x} \det \xi_g$.

(2) Use Lemma \ref{lemma:xiproperties} (4) with $y = x^{-1}$ to obtain
$C_g(x) C_g(x^{-1}) = - x C_g(x^{-1}) - C_g(x) x^{-1}$.
It follows from \eqref{eq:conj} that 
$C_g(x) = \bar{x} \xi_g$ and $C_g(x^{-1}) = - \bar{x} \xi_g$. We therefore have
that $- \bar{x}^2 \xi_g^2 = \bar{x} x \xi_g - \bar{x} \xi_g x^{-1}$. It follows from
Lemma \ref{lemma:xiproperties} (2) that 
$\xi_g x^{-1} = x \xi_g - \bar{x} \det \xi_g \cdot \mathbb{1}$.
The claim follows after replacing $x$ by $x^{-1}$.
\end{proof}

\begin{lemma} \label{lemma:xiproperties_formofxi}
Set $t = \det \xi_g$. Then 
$\xi_g = \left( \begin{smallmatrix} t & p \\ q & - t \end{smallmatrix} \right)$ 
for some $p,q \in K$ satisfying $p  q = - t (t+1)$.
\end{lemma}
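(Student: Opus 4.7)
The plan is to reduce everything to the trace identity in Lemma \ref{lemma:xiproperties2} (1), applied to the very matrix $E_{11} = \left(\begin{smallmatrix} 1 & 0 \\ 0 & 0 \end{smallmatrix}\right)$ that was used to define $\xi_g$.

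First I would invoke Lemma \ref{lemma:xiproperties} (1) to get $\tr \xi_g = 0$, so that $\xi_g$ automatically takes the shape $\left(\begin{smallmatrix} a & p \\ q & -a \end{smallmatrix}\right)$ for some scalars $a,p,q \in K$, with $\det \xi_g = -a^2 - pq$. The statement of the lemma is thus equivalent to the single identity $a = t$: once that is established, rewriting $t = \det \xi_g = -a^2 - pq = -t^2 - pq$ yields $pq = -t(t+1)$ for free.

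To pin down $a$, I would specialise Lemma \ref{lemma:xiproperties2} (1) to $x = E_{11} \in \Diag_2(K)$, for which $\bar{x} = 1 - 0 = 1$. A direct multiplication shows that $x \xi_g$ has top row $(a, p)$ and vanishing bottom row, so its trace is exactly $a$. Lemma \ref{lemma:xiproperties2} (1) then gives $a = \tr(x\xi_g) = \bar{x} \det \xi_g = t$, which is all that was needed.

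There is no real obstacle here; the lemma is essentially a bookkeeping consequence of the trace identities already proved. The only point worth a sanity check is that Lemma \ref{lemma:xiproperties2} (1) applies to a non-invertible diagonal matrix like $E_{11}$, but its statement and proof are phrased for arbitrary $x \in \Diag_2(K)$ and treat the degenerate case $\bar{x} = 0$ separately, so the application is legitimate.
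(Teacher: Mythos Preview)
Your proof is correct and follows essentially the same approach as the paper: both apply Lemma~\ref{lemma:xiproperties2}~(1) with $x = E_{11}$ to identify the top-left entry of $\xi_g$ as $t$, and then read off $pq = -t(t+1)$ from $\det \xi_g = t$. The only cosmetic difference is that the paper obtains the bottom-right entry $-t$ by a second application of Lemma~\ref{lemma:xiproperties2}~(1) with $x = E_{22}$, whereas you get it from $\tr \xi_g = 0$ via Lemma~\ref{lemma:xiproperties}~(1); both are equally valid.
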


\begin{proof}
Use Lemma \ref{lemma:xiproperties2} (1) with 
$x = \left( \begin{smallmatrix} 1 & 0 \\ 0 & 0 \end{smallmatrix} \right)$
to conclude that $(\xi_g)_1^1 = t$. The same argument with
$x = \left( \begin{smallmatrix} 0 & 0 \\ 0 & 1 \end{smallmatrix} \right)$
gives $(\xi_g)_2^2 = - t$. 
The equality $p q = - t (t+1)$ is nothing but $\det \xi_g = t$.
\end{proof}

\begin{remark} \label{remark:xig_explicit}
Writing 
$g = \left( \begin{smallmatrix} a & b \\ c & d \end{smallmatrix} \right) \in \SL_2(K)$,
we have
$\xi_g = \left( \begin{smallmatrix} bc & bd \\ - ac & - bc \end{smallmatrix} \right)$
and $\det \xi_g = bc$.
\end{remark}

\begin{example}
We show how the properties of $\xi_g$ determined in this section can be used to quickly show that the commutator word $w = [x^a, y^b]$ for $a,b$ non-zero integers contains non-trivial unipotents in its image.
To this end, take a semisimple element $y \in \Torus_2(K)$ with $y^b$ non-trivial, and let $x \in \SL_2(K)$ be a non-diagonal upper-triangular matrix.
Thus $\xi_{x^a} = \left( \begin{smallmatrix} 0 & \star \\ 0 & 0 \end{smallmatrix} \right)$ with $\star \neq 0$ by Remark \ref{remark:xig_explicit}.
We have
\[
\tilde w (x,y) =
(y^{-b})^{x^a} y^b = 
(y^{-b} + \overline{y^{-b}} \xi_{x^a}) y^b =
\mathbb{1} + \overline{y^{-b}} \xi_{x^a} y^b,
\]
hence $\tilde w(x,y)$ is a non-trivial unipotent.
\end{example}
\section{Wiggling the word map on semisimple pairs}
\label{sec:wiggling}

In this section, we deal with finding a description of the values of the word map
$\tilde w$ on pairs of semisimple elements in terms of certain polynomials.

\subsection{Reducing the word}

The word $w$ can be written as
\begin{equation}\label{eq:word}
w = \prod_{i=1}^n x^{a_i} y^{b_i}
\end{equation}
for some integers $a_i, b_i$ and a positive integer $n$.
Since our objective is to show that the image of $\tilde w$ contains a non-trivial
unipotent element in $\SL_2(K)$, it suffices to show this after replacing $w$ by any of
its conjugates $x w x^{-1}$ or $y^{-1} w y$. We can repeat this procedure of reducing $w$.
If we end with a power of $x$ or $y$,
then the image of the word map clearly contains a non-trivial unipotent.
We will therefore focus on the case when the reduction process
end with a word that is cyclically reduced and of the form
\eqref{eq:word} with $a_i, b_i \neq 0$ for all $1 \leq i \leq n$.
In this situation, we say
that the {\em length} of the word $w$ is $\ell(w) = n$.


\begin{example}
The double commutator word 
$[[x^k, y^l], [x^m, y^n]]$
reduces to
\[
w_{k,l,m,n}
=
x^{m} y^{-l + n} x^{-k} y^{l} x^{k} y^{-n} x^{-m} y^{n} x^{-k + m} y^{-l} x^{k} y^{l} x^{-m} y^{-n}.
\]
As long as $l \neq n$ and $k \neq m$, 
this word is cyclically reduced and of length $7$.
\end{example}

\subsection{Wiggling one parameter}

Let $R_0 = k[\lambda, \lambda^{-1}, \mu, \mu^{-1}]$ be the commutative ring of Laurent
polynomials on the variables $\lambda, \mu$. Consider a pair of generic semisimple
elements
\[
x = \begin{pmatrix} \lambda & 0 \\ 0 & \lambda^{-1} \end{pmatrix}
\quad \text{and} \quad
y = \begin{pmatrix} \mu & 0 \\ 0 & \mu^{-1} \end{pmatrix}.
\]
For a fixed choice of $\lambda, \mu \in K^*$, we have the {\em wiggle map}
\[
\wiggle^{x,y}_w \colon \SL_2(K) \to \SL_2(K), \quad g \mapsto \tilde w(x, y^g)
\]
that conjugates the second parameter $y$ and returns the $\tilde w$ value.

\begin{lemma} \label{lemma:imageonsemisimpleisunionofvarphis}
\[
\image \tilde w|_{\SL_2(K)_{ss} \times \SL_2(K)_{ss}} 
=
\bigcup_{x,y \in \Torus_2(K)} \bigcup_{h \in \SL_2(K)} \left( \image \wiggle^{x,y}_w \right)^h
\]
\end{lemma}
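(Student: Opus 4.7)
The plan is to prove both inclusions directly from two elementary facts. First, every semisimple element of $\SL_2(K)$ is conjugate, via an element of $\SL_2(K)$, to an element of $\Torus_2(K)$: diagonalize it by a change-of-basis matrix in $\GL_2(K)$, then rescale one of the eigenvectors so that the conjugating matrix has determinant $1$ (if the element is already scalar, take the identity). Second, for any word $w \in \FF_2$, the associated word map is equivariant under simultaneous conjugation, meaning $\tilde w(a^h, b^h) = \tilde w(a,b)^h$ for every $h \in \SL_2(K)$; this is immediate from the presentation \eqref{eq:word} together with $(uv)^h = u^h v^h$ and $(u^{-1})^h = (u^h)^{-1}$.

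For the inclusion $\subseteq$, take a pair of semisimple elements $(g_1, g_2)$ and write $g_1 = x^{h_1}$, $g_2 = y^{h_2}$ with $x, y \in \Torus_2(K)$ and $h_1, h_2 \in \SL_2(K)$. The key rewriting is $y^{h_2} = \bigl(y^{h_2 h_1^{-1}}\bigr)^{h_1}$, after which simultaneous-conjugation equivariance gives
\[
\tilde w(g_1, g_2) = \tilde w\bigl(x^{h_1}, (y^{h_2 h_1^{-1}})^{h_1}\bigr) = \tilde w\bigl(x, y^{h_2 h_1^{-1}}\bigr)^{h_1} = \wiggle^{x,y}_w(h_2 h_1^{-1})^{h_1},
\]
which lies in the right-hand side. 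For the reverse inclusion $\supseteq$, any element of $\bigl(\image \wiggle^{x,y}_w\bigr)^h$ is of the form $\tilde w(x, y^g)^h$ with $x, y \in \Torus_2(K)$ and $g, h \in \SL_2(K)$, and by equivariance this equals $\tilde w\bigl(x^h, y^{gh}\bigr)$; since $x^h$ and $y^{gh}$ are conjugate to diagonal matrices, they are semisimple.

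The only subtlety here is bookkeeping of the conjugations — in particular, isolating the correct intermediate element $h_2 h_1^{-1}$ so that $h_1$ factors out cleanly on the right — but no structural input beyond the two observations above is required.
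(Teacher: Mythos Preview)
Your proof is correct and follows essentially the same approach as the paper: both directions rest on diagonalizing semisimple elements into $\Torus_2(K)$ and the equivariance $\tilde w(a^h,b^h)=\tilde w(a,b)^h$, with the only difference being cosmetic bookkeeping (the paper names the intermediate conjugator $g$ directly rather than recovering it as $h_2 h_1^{-1}$).
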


\begin{proof}
The left hand side is closed under conjugation by elements of $\SL_2(K)$ and it
contains $\image \wiggle^{x,y}_w$. This proves that LHS $\supseteq$ RHS. As for the converse,
consider a pair of elements $x', y' \in \SL_2(K)_{ss}$. Since every semisimple element 
is in a maximal torus and all maximal tori are conjugate, there exist $h, g \in \SL_2(K)$
with $x := (x')^{h^{-1}} \in \Torus_2(K)$ and 
$y := ( (y')^{h^{-1}} )^{g^{-1}} \in \Torus_2(K)$. Then
\[
\tilde w(x', y') = \tilde w \left( x , (y')^{h^{-1}} \right)^{h} = \wiggle^{x,y}_w( g )^h \in \text{RHS}.
\qedhere
\]
\end{proof}


Our strategy is to find a non-trivial unipotent as the image of a pair of semisimple
elements under $\tilde w$. It follows from Lemma \ref{lemma:imageonsemisimpleisunionofvarphis}
that it is enough to search for this unipotent in the image of the map $\wiggle^{x,y}_w$ 
for some $x,y \in \Torus_2(K)$. 

\subsection{A normal form of the wiggle}

Write the word $w$ as in \eqref{eq:word}. We can use \eqref{eq:conj} to 
rewrite $\wiggle^{x,y}_w(g)$ in terms of $\xi_g$ as follows: 
\begin{equation}\label{eq:phig}
\wiggle^{x,y}_w(g) = 
\prod_{i=1}^n x^{a_i} (y^g)^{b_i} = 
\prod_{i=1}^n x^{a_i} (y^{b_i})^g =
\prod_{i=1}^n \left( x^{a_i} y^{b_i} + x^{a_i} \overline{y^{b_i}} \cdot \xi_g \right).
\end{equation}
We will expand the last product using the following notation.
For a subset $I \subseteq \{1,2,\ldots,n\}$, let
\[
X_I(i)=
\begin{cases}
    x^{a_i} \overline{y^{b_i}} \cdot \xi_g  & \text{ if } i \in I \\
    x^{a_i} y^{b_i}                         & \text{ if } i \not\in I
\end{cases}
\]
and let $W_I = \prod_{i=1}^n X_I(i)$. Then
\begin{equation}\label{eq:phiassum}
\wiggle^{x,y}_w(g) = \sum_{I \subseteq \{1,2,\ldots,n\}} W_I.
\end{equation}
Each summand $W_I$ can be written as
\begin{equation}\label{eq:wIstd}
    W_I = \delta \cdot D_0 \xi_g D_1 \xi_g D_2 \cdots D_{r-1} \xi_g D_r,
\end{equation}
where $r = |I|$, $\delta \in R_0$ and $D_i \in \Diag_2(R_0) \cap \SL_2(R_0)$ for $0 \leq i \leq r$. We now show how
this expression can be further simplified.

\begin{lemma}\label{lemma:wI}
Each $W_I$ can be written as $E_0 + E_1 \xi_g$
with $E_0, E_1 \in \Diag_2(R_0[\det \xi_g])$.
\end{lemma}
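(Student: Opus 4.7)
The plan is to prove, by induction on $r \geq 0$, a slight strengthening of the lemma: any expression of the form
\[
W = \delta \cdot D_0 \xi_g D_1 \xi_g D_2 \cdots D_{r-1} \xi_g D_r
\]
with $\delta \in R_0[t]$ (where $t = \det \xi_g$) and $D_i \in \Diag_2(R_0) \cap \SL_2(R_0)$ can be written as $E_0 + E_1 \xi_g$ with $E_0, E_1 \in \Diag_2(R_0[t])$. The original lemma is then the special case $\delta \in R_0$ arising from \eqref{eq:wIstd}.

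The base case $r = 0$ is immediate: take $E_0 = \delta D_0$ and $E_1 = 0$. For the inductive step, the key tool is the commutation identity from Lemma \ref{lemma:xiproperties2}(2), namely $\xi_g D_1 = D_1^{-1} \xi_g + \overline{D_1}\, t \cdot \mathbb{1}$. Applying it to the first occurrence of $\xi_g D_1$ inside $W$ splits $W$ as
\[
W = \delta \cdot D_0 D_1^{-1} \xi_g^2 D_2 \xi_g D_3 \cdots \xi_g D_r \; + \; \delta \, \overline{D_1} \, t \cdot D_0 \xi_g D_2 \xi_g \cdots \xi_g D_r.
\]
Using $\xi_g^2 = -t \cdot \mathbb{1}$ from Lemma \ref{lemma:xiproperties}(2), the first summand collapses to $-t\delta \cdot D_0 D_1^{-1} D_2 \, \xi_g D_3 \cdots \xi_g D_r$, which contains $r-2$ factors of $\xi_g$ (and is simply $-t\delta D_0 D_1^{-1} D_2$ when $r = 1$), while the second contains $r-1$ such factors. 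In both summands the new scalar lies in $R_0[t]$ and the new diagonals $D_0, D_1^{-1}, D_2, \ldots$ remain in $\Diag_2(R_0) \cap \SL_2(R_0)$ (since $\Diag_2(R_0) \cap \SL_2(R_0)$ is closed under products and inverses), so the inductive hypothesis applies to each.

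Summing two expressions of the shape $E_0 + E_1 \xi_g$ clearly yields another such expression, so the induction closes. No serious obstacle is anticipated: the argument amounts to systematically applying the two rewriting rules above in order to eliminate one factor of $\xi_g$ at a time, and the only bookkeeping required is checking that the coefficient ring $R_0[t]$ absorbs the extra factors of $t$ and $\overline{D_i}$ that appear along the way.
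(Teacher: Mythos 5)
Your argument is essentially the paper's: split off the first $\xi_g$ by the commutation identity from Lemma~\ref{lemma:xiproperties2}(2), collapse the resulting $\xi_g^2$ via Lemma~\ref{lemma:xiproperties}(2), and induct on the number of $\xi_g$ factors. Your explicit strengthening of the inductive hypothesis to allow $\delta \in R_0[t]$ is a nice touch — the paper glosses over this by saying the scalar "can clearly be ignored", while your phrasing makes the absorption of the extra $t$ factors into the coefficient ring precise.

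There is one small indexing slip worth flagging. The displayed splitting of $W$ and the subsequent collapse via $\xi_g^2 = -t\mathbb{1}$ presuppose $r \geq 2$ (they need a $D_2$ and a second $\xi_g$ to the right of $D_1$), and your parenthetical "(and is simply $-t\delta D_0 D_1^{-1} D_2$ when $r=1$)" should read $r = 2$, since $r - 2 = 0$ there. As written your induction has only the base case $r=0$ plus an inductive step valid for $r \geq 2$, so $r = 1$ falls through the cracks. It is of course trivial — a single application of the commutation identity gives $\delta D_0 \xi_g D_1 = \delta\, \overline{D_1}\, t\, D_0 + \delta D_0 D_1^{-1} \xi_g$, already in the form $E_0 + E_1\xi_g$ — and the paper treats it as a second base case for exactly this reason; you should do the same.
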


\begin{proof}
Write $W_I$ as in~\eqref{eq:wIstd}, where the factor $\delta$ can clearly be ignored.
We use induction on $r$ to prove the claim.
If $r=0$, there is nothing to prove. If $r=1$, we can use 
Lemma~\ref{lemma:xiproperties2} (2) to express $W_I$ in the 
desired form:
\[
    D_0 \xi_g D_1 = \bar D_1 \det \xi_g \cdot D_0 + D_0 D_1^{-1} \xi_g.
\]
Now let $r \geq 2$ be a positive integer and assume by induction
that the expression~\eqref{eq:wIstd} can be written in the desired form 
for all $r' < r$.
Apply Lemma~\ref{lemma:xiproperties2} (2) on the term $\xi_gD_1$ to obtain
\[
D_0 \xi_g D_1 \xi_g D_2 \cdots D_{r-1} \xi_g D_r =
D_0 \left( D_1^{-1} \xi_g^2 D_2 +
\overline{D_1} \det \xi_g \cdot \xi_g D_2 \right)
\xi_g D_3 \cdots \xi_g D_r,
\]
which is the same as 
\[
\det \xi_g \cdot D_0 D_1^{-1} D_2\xi_g\cdots\xi_gD_r +
\overline{D_1} \det \xi_g \cdot D_0\xi_gD_2\cdots\xi_gD_r
\]
by Lemma \ref{lemma:xiproperties}.
By induction, both summands can be written in the desired form.
The same is therefore true for their sum and the proof is complete.
\end{proof}

Let $R = R_0[t]$ be the ring of polynomials with coefficients in $R_0$.
We will identify the ring $\Mat_2(R)$ with $\Mat_2(R_0)[t]$.
In this language, the wiggle can be expressed in the following unique way.

\begin{theorem}\label{theorem:wiggle_normalform}
There exist unique matrix polynomials $A^{x,y}_w, B^{x,y}_w \in \Diag_2(R_0)[t]$ so that
\[
\forall g \in \SL_2(K). \qquad
\wiggle^{x,y}_w(g) = A^{x,y}_w(t) + B^{x,y}_w(t) \cdot \xi_g
\quad \text{at} \quad
t = \det \xi_g.
\]
\end{theorem}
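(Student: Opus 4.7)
The plan is to establish existence by combining the expansion \eqref{eq:phiassum} with Lemma \ref{lemma:wI}, and uniqueness by exploiting the independent variability of the entries of $\xi_g$.

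For existence, start from \eqref{eq:phiassum}, which writes $\wiggle^{x,y}_w(g) = \sum_I W_I$, and apply Lemma \ref{lemma:wI} to each $W_I$ individually. Tracing the inductive argument in the proof of that lemma shows that the elements $E_0, E_1 \in \Diag_2(R_0[\det \xi_g])$ so obtained are of the form $\hat E_j^I(\det \xi_g)$ for a polynomial $\hat E_j^I(t) \in \Diag_2(R_0)[t]$ depending only on $\lambda, \mu$ and the exponents of $w$, not on $g$. This is because the only operations used in that proof are multiplication by elements of $\Diag_2(R_0)$ and applications of Lemma \ref{lemma:xiproperties2} (2), which introduces $\det \xi_g$ explicitly as a scalar factor, so can be carried out with the formal variable $t$ throughout. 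Setting $A^{x,y}_w(t) = \sum_I \hat E_0^I(t)$ and $B^{x,y}_w(t) = \sum_I \hat E_1^I(t)$, both in $\Diag_2(R_0)[t]$, then yields the desired decomposition.

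For uniqueness, it suffices to show that if $A, B \in \Diag_2(R_0)[t]$ satisfy $A(\det \xi_g) + B(\det \xi_g) \xi_g = 0$ for every $g \in \SL_2(K)$, then $A = B = 0$. Write $A = \diag(a_1(t), a_2(t))$ and $B = \diag(b_1(t), b_2(t))$, and use Lemma \ref{lemma:xiproperties_formofxi} to parametrize $\xi_g = \left(\begin{smallmatrix} t & p \\ q & -t \end{smallmatrix}\right)$ with $t = \det \xi_g$. The off-diagonal entries of the identity then read $b_1(t) p = 0$ and $b_2(t) q = 0$. By Remark \ref{remark:xig_explicit}, for any fixed specialization of $\lambda, \mu \in K^*$ and any $t \in K$, one can find $g \in \SL_2(K)$ realizing $(\det \xi_g, (\xi_g)^2_1) = (t, p_0)$ for any prescribed $p_0 \in K^*$ (for instance by taking $b = 1$, $c = t$, $d = p_0$, and $a = (1+t)/p_0$ when $t \neq -1$, with a similar explicit choice when $t = -1$). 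Since $K$ is infinite, it follows that the specialization of $b_1$ at any $(\lambda, \mu)$ is the zero polynomial in $K[t]$, so $b_1 = 0$ in $R_0[t]$. Symmetrically $b_2 = 0$, and the diagonal entries then force $a_1 = a_2 = 0$.

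The main obstacle is the uniqueness step: one must confirm that the off-diagonal entries of $\xi_g$ really do vary with enough freedom over the fiber $\det \xi_g = t$ to kill all coefficients of $B$, and this reduces to a direct check using the explicit parametrization in Remark \ref{remark:xig_explicit}. Existence, by contrast, is essentially bookkeeping on top of Lemma \ref{lemma:wI}; the inductive nature of that proof makes the polynomial formulation in $t$ automatic.
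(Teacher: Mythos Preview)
Your proof is correct and follows essentially the same approach as the paper: existence via \eqref{eq:phiassum} and Lemma~\ref{lemma:wI}, and uniqueness by reading off the off-diagonal entries of $\xi_g$ to kill $B$ first. The only cosmetic difference is that the paper restricts to $t \notin \{0,-1\}$ (where $pq = -t(t+1) \neq 0$ forces $p \neq 0$ automatically for \emph{every} such $g$), whereas you explicitly construct a $g$ realizing any prescribed pair $(t, p_0)$ with $p_0 \neq 0$; both routes yield the same conclusion.
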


\begin{proof}
Existence follows from \eqref{eq:phiassum} and Lemma \ref{lemma:wI}.
As for uniqueness, suppose one has
$A^{x,y}_w(t) + B^{x,y}_w(t) \xi_g = A'^{x,y}_w(t) + B'^{x,y}_w(t) \xi_g$
at $t = \det \xi_g$ for all $g \in \SL_2(K)$.
It will suffice to consider only those $g \in \SL_2(K)$ with $\det \xi_g \notin \{ 0, -1 \}$.
By Lemma \ref{lemma:xiproperties_formofxi}, we can write
$\xi_g = \left( \begin{smallmatrix} t & p \\ q & - t \end{smallmatrix} \right)$
with $pq = t (t + 1)$.
Then $(B^{x,y}_w(t))^1_1 p = (B'^{x,y}_w(t))^1_1 p$.
This implies that the polynomials $(B^{x,y}_w(t))^1_1$ and $(B'^{x,y}_w(t))^1_1$
have the same values whenever $t \notin \{ 0, -1 \}$, hence they are the same.
A similar argument gives that $(B^{x,y}_w(t))^2_2 = (B'^{x,y}_w(t))^2_2$.
Therefore $B^{x,y}_w = B'^{x,y}_w$ and hence also $A^{x,y}_w = A'^{x,y}_w$.
\end{proof}

By means of Theorem \ref{theorem:wiggle_normalform}, one can compute
$\wiggle^{x,y}_w(g)$ by first computing the two associated matrix polynomials
$A^{x,y}_w(t), B^{x,y}_w(t) \in \Diag_2(R_0)[t]$ and evaluating them
at $t = \det \xi_g$.

\subsection{Computing the associated polynomials}

We now give an effective way of computing the polynomials 
$A^{x,y}_w(t), B^{x,y}_w(t)$. This will rely on a recursive formula
based on shortening the length of the word $w$.

In the case of the trivial word $w = 1 \in \FF_2$, we clearly have
$\wiggle^{x,y}_1(g) = \mathbb{1}$, hence $A^{x,y}_1 = \mathbb{1}$ and $B^{x,y}_1 = 0$.

Suppose now that $\ell(w) \geq 1$. We can thus write $w = w' \cdot x^a y^b$
for some $w' \in \FF_2$ with $\ell(w') < \ell(w)$.
This induces 
\[
\wiggle^{x,y}_w(g) = \wiggle_{w'}^{x,y}(g) \cdot \wiggle_{x^a y^b}^{x,y}(g),
\]
which is the same as
\[
A^{x,y}_w(t) + B^{x,y}_w(t) \xi_g = 
\left( A^{x,y}_{w'}(t) + B^{x,y}_{w'}(t) \xi_g \right)
\left(  x^a y^b + \overline{y^b} \cdot x^a \xi_g \right)
\]
at $t = \det \xi_g$ by Theorem \ref{theorem:wiggle_normalform}.
Expand the product on the right side and use Lemma~\ref{lemma:xiproperties2} (2) to
write the obtained in a normal form as in 
Theorem \ref{theorem:wiggle_normalform}.
Uniqueness of normal forms then gives the following recursive formulae:
\[
\left\{
\begin{array}{l}
A^{x,y}_w(t) = 
A^{x,y}_{w'}(t) x^ay^b 
+ B^{x,y}_{w'}(t) \overline{x^ay^b} t 
- B^{x,y}_{w'}(t) \overline{y^b} x^{-a} t,\\[0.5em]

B^{x,y}_w(t) = 
A^{x,y}_{w'}(t) \overline{y^b} x^a 
+ B^{x,y}_{w'}(t) x^{-a} y^{-b} 
+ B^{x,y}_{w'}(t) \overline{x^a} \cdot \overline{y^b} t.
\end{array}
\right.
\]
The first equality can be simplified using the fact that
$\overline{x^a y^b} \cdot \mathbb{1} = \overline{x^a} y^b + \overline{y^b} x^{-a}$.
We then arrive to the following recursion:
\[
\left\{
\begin{array}{l}
A^{x,y}_w(t) = 
A^{x,y}_{w'}(t) \cdot x^ay^b 
+ B^{x,y}_{w'}(t) \cdot \overline{x^a} y^b t,\\[0.5em]

B^{x,y}_w(t) = 
A^{x,y}_{w'}(t) \cdot \overline{y^b} x^a 
+ B^{x,y}_{w'}(t) \cdot 
\left( x^{-a} y^{-b} + \overline{x^a} \cdot \overline{y^b} t \right).
\end{array}
\right.
\]
The formulae can be thought of as linear in the recursive variables
$A^{x,y}_{w'}(t), B^{x,y}_{w'}(t)$ over the ring $\Diag_2(R)$. We can
therefore express them in terms of matrices. Note that the two particular
cases $(a,b) \in \{ (1,0), (0,1) \}$ of the recursion are
\begin{align*}
A^{x,y}_{w \cdot x}(t) &= A^{x,y}_w(t) \cdot x +  B^{x,y}_w(t) \cdot \overline{x}t,&
B^{x,y}_{w \cdot x}(t) &= B^{x,y}_w(t) \cdot x^{-1},\\
A^{x,y}_{w \cdot y}(t) &= A^{x,y}_w(t) \cdot y, & 
B^{x,y}_{w \cdot y}(t) &= A^{x,y}_{w}(t) \cdot \overline{y} + B^{x,y}_w(t) \cdot y^{-1}.
\end{align*}
In order to write these in matrix form, let
\[
X = \begin{pmatrix} x & \overline{x} t \cdot \mathbb{1} \\ 0 & x^{-1} \end{pmatrix}
\quad \text{and} \quad
Y = \begin{pmatrix} y & 0 \\ \overline{y} \cdot \mathbb{1} & y^{-1} \end{pmatrix}
\]
be matrices in $\SL_2(\Diag_2(R))$. We then have
\[
\begin{pmatrix} 
A^{x,y}_w(t) \\[0.5em] B^{x,y}_w(t)
\end{pmatrix} 
=
Y^b X^a
\begin{pmatrix}
A^{x,y}_{w'}(t) \\[0.5em] B^{x,y}_{w'}(t) 
\end{pmatrix}.
\]
Extending this step by step to the whole word $w$ gives the following way of
quickly computing the associated polynomials of the wiggle map.

\begin{proposition} \label{prop:calculating_associated_polynomials}
Let $w = \prod_{i=1}^n x^{a_i} y^{b_i}$ and $\overleftarrow{w} = \prod_{i=n}^1 y^{b_i} x^{a_i}$.
Then
\[
\begin{pmatrix} 
A^{x,y}_w(t) \\[0.5em] B^{x,y}_w(t)
\end{pmatrix} 
=
\overleftarrow{w}(X,Y) \cdot 
\begin{pmatrix}
\mathbb{1} \\[0.5em] 0 
\end{pmatrix}.
\]
\end{proposition}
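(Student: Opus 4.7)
The plan is to prove the proposition by induction on the length $n$ of $w$, using as the main engine the single-variable recursion derived immediately above the statement. The base case $n = 0$ corresponds to $w = 1$, where $A^{x,y}_1 = \mathbb{1}$, $B^{x,y}_1 = 0$, and $\overleftarrow{w}(X,Y) = \mathbb{1}$ (the $2 \times 2$ identity over $\Diag_2(R)$), so both sides agree.

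Before tackling the inductive step, the key auxiliary fact I need is an explicit formula for powers of $X$ and $Y$, namely
\[
X^a = \begin{pmatrix} x^a & \overline{x^a} t \cdot \mathbb{1} \\ 0 & x^{-a} \end{pmatrix}
\quad \text{and} \quad
Y^b = \begin{pmatrix} y^b & 0 \\ \overline{y^b} \cdot \mathbb{1} & y^{-b} \end{pmatrix}
\]
for every integer $a, b$. I would prove both by a short induction on $a$ (resp. $b$), observing that the top-right entry of $X^{a+1}$ is $x^a \cdot \overline{x} t + \overline{x^a} t \cdot x^{-1} = (x^a \overline{x} + \overline{x^a} x^{-1}) t$, and that $\overline{x^{a+1}} = x^a \overline{x} + \overline{x^a} x^{-1}$ follows from $\lambda^{a+1} - \lambda^{-a-1} = \lambda^a (\lambda - \lambda^{-1}) + (\lambda^a - \lambda^{-a}) \lambda^{-1}$. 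Negative powers then follow since $X$ is upper triangular with invertible diagonal, or by applying the same induction to $X^{-1}$.

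For the inductive step, write $w = w' \cdot x^{a_n} y^{b_n}$ with $\ell(w') = n-1$, so $\overleftarrow{w} = y^{b_n} x^{a_n} \cdot \overleftarrow{w'}$. The general recursion displayed just before the proposition, applied with $(a,b) = (a_n, b_n)$, reads
\[
\begin{pmatrix} A^{x,y}_w(t) \\[0.3em] B^{x,y}_w(t) \end{pmatrix}
=
M \begin{pmatrix} A^{x,y}_{w'}(t) \\[0.3em] B^{x,y}_{w'}(t) \end{pmatrix},
\qquad
M = \begin{pmatrix} x^{a_n} y^{b_n} & \overline{x^{a_n}} y^{b_n} t \\ \overline{y^{b_n}} x^{a_n} & x^{-a_n} y^{-b_n} + \overline{x^{a_n}} \, \overline{y^{b_n}} t \end{pmatrix},
\]
where the entries are all in $\Diag_2(R)$ and hence commute among themselves. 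A direct computation, using the explicit formulas for $X^{a_n}$ and $Y^{b_n}$ above, yields $Y^{b_n} X^{a_n} = M$. Combining this with the inductive hypothesis applied to $w'$ gives
\[
\begin{pmatrix} A^{x,y}_w(t) \\[0.3em] B^{x,y}_w(t) \end{pmatrix}
=
Y^{b_n} X^{a_n} \, \overleftarrow{w'}(X,Y) \begin{pmatrix} \mathbb{1} \\[0.3em] 0 \end{pmatrix}
=
\overleftarrow{w}(X,Y) \begin{pmatrix} \mathbb{1} \\[0.3em] 0 \end{pmatrix},
\]
completing the induction.

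I expect no real obstacles: everything is a bookkeeping exercise once one has the single-step recursion and the closed form for $X^a, Y^b$. The only place one must stay alert is that the entries of $X, Y$ live in $\Diag_2(R)$, so when expanding matrix products one must distinguish between left/right multiplication only for the action on $A^{x,y}_w, B^{x,y}_w$ — but since all relevant matrices are diagonal they commute, which is why the recursion for $w'\cdot x^a y^b$ lines up with $Y^b X^a$ rather than $X^a Y^b$ only after using this commutativity.
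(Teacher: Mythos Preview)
Your proposal is correct and follows essentially the same approach as the paper: both prove the proposition by iterating the recursion $\begin{psmallmatrix} A_w \\ B_w \end{psmallmatrix} = Y^{b} X^{a} \begin{psmallmatrix} A_{w'} \\ B_{w'} \end{psmallmatrix}$ along the syllables of $w$. The only cosmetic difference is that the paper obtains $Y^{b} X^{a}$ by repeatedly applying the single-letter recursions for $X$ and $Y$, whereas you instead compute closed forms for $X^{a}, Y^{b}$ and verify directly that their product equals the recursion matrix $M$; both routes are valid and yield the same induction.
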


We record an immediate corollary.

\begin{corollary}
The polynomials $A^{x,y}_w(t), B^{x,y}_w(t) \in \Diag_2(R_0)[t]$ are coprime. 
\end{corollary}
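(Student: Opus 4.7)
The plan is to exploit a determinantal identity coming from $\SL_2$ of the commutative ring $\Diag_2(R)$. First I would verify that the two auxiliary matrices
\[
X = \begin{pmatrix} x & \overline{x} t \cdot \mathbb{1} \\ 0 & x^{-1} \end{pmatrix}
\quad \text{and} \quad
Y = \begin{pmatrix} y & 0 \\ \overline{y} \cdot \mathbb{1} & y^{-1} \end{pmatrix}
\]
introduced in the discussion preceding Proposition \ref{prop:calculating_associated_polynomials} both lie in $\SL_2(\Diag_2(R))$. This is immediate: each is triangular over the commutative ring $\Diag_2(R)$, and in each case the product of the diagonal entries is $\mathbb{1}$. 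Since $\SL_2$ is a group, the same is true of $X^{-1}, Y^{-1}$, and hence of $M := \overleftarrow{w}(X, Y)$ for any word $w$.

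By Proposition \ref{prop:calculating_associated_polynomials}, the first column of $M$ is precisely $\left(A^{x,y}_w(t), B^{x,y}_w(t)\right)^T$. Writing the second column as $(C, D)^T$ with $C, D \in \Diag_2(R_0)[t]$, the condition $\det M = \mathbb{1}$ becomes the Bezout-type identity
\[
A^{x,y}_w(t) \cdot D - C \cdot B^{x,y}_w(t) = \mathbb{1}
\]
in the commutative ring $\Diag_2(R_0)[t] \cong R_0[t] \times R_0[t]$. Projecting onto each diagonal component yields two honest Bezout relations in $R_0[t]$, proving that the corresponding diagonal entries of $A^{x,y}_w$ and $B^{x,y}_w$ generate the unit ideal and in particular are coprime.

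There is no real obstacle here: the corollary is a payoff of having set up $X$ and $Y$ so that they sit inside $\SL_2(\Diag_2(R))$, which was the delicate step already carried out when deriving Proposition \ref{prop:calculating_associated_polynomials}. The only minor point worth noting is to confirm that $M$ has entries in $\Diag_2(R_0)[t]$ rather than merely in $\Diag_2(R)$, but this is automatic from the shape of $X$ and $Y$ and the identification $\Diag_2(R) = \Diag_2(R_0)[t]$.
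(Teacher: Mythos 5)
Your argument is correct and is essentially the proof the paper gives: both use that $\overleftarrow{w}(X,Y) \in \SL_2(\Diag_2(R))$ has determinant $\mathbb{1}$, and read off a Bezout identity from the first column being $(A^{x,y}_w, B^{x,y}_w)^T$. Your extra step of explicitly checking $X, Y \in \SL_2(\Diag_2(R))$ and then projecting onto the diagonal factors of $\Diag_2(R_0)[t] \cong R_0[t] \times R_0[t]$ is just a slight elaboration of the same reasoning, not a different route.
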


\begin{proof}
It follows from Proposition \ref{prop:calculating_associated_polynomials} that the first 
column of the matrix $\overleftarrow{w}(X,Y)$ is
$\left( A^{x,y}_w(t), B^{x,y}_w(t) \right)^T$.
It now follows from $\det \overleftarrow{w}(X,Y) = \mathbb{1}$ that
\[
A^{x,y}_w(t) \cdot \overleftarrow{w}(X,Y)^2_2
-
B^{x,y}_w(t) \cdot \overleftarrow{w}(X,Y)^1_2
=
\mathbb{1}. \qedhere
\]
\end{proof}

\subsection{Symmetry and the shape of the wiggle}

Let
$\sigma = \left( \begin{smallmatrix} 0 & 1 \\ 1 & 0 \end{smallmatrix} \right) \in \SL_2(K)$.
We will exploit the action of $\sigma$ on $\SL_2(K)$ by conjugation to obtain a
certain symmetrical relation between the polynomials defining the associated matrix
polynomials. Note first that we have $x^\sigma = x^{-1}$ and $y^\sigma = y^{-1}$. 
Now
\[
\wiggle^{x,y}_w(g^\sigma)^\sigma = 
w(x,y^{g^\sigma})^\sigma = 
w(x^\sigma, y^{g^\sigma \sigma}) = 
w(x^\sigma, (y^\sigma)^{g}) =
\wiggle^{x^{-1}, y^{-1}}_w(g).
\]
It follows from Theorem \ref{theorem:wiggle_normalform} that we can therefore
write
\[
\wiggle^{x^{-1}, y^{-1}}_w(g) =
\left( 
A^{x,y}_w(\det \xi_{g^\sigma}) + 
B^{x,y}_w(\det \xi_{g^\sigma}) \cdot \xi_{g^\sigma} 
\right)^\sigma.
\]
By Lemma \ref{lemma:xiproperties} (3), we have $\xi_{g^\sigma} = - \xi_g^\sigma$. 
This gives
\[
A^{x^{-1},y^{-1}}_w(t) +
B^{x^{-1},y^{-1}}_w(t) \cdot \xi_{g} 
=
\wiggle^{x^{-1}, y^{-1}}_w(g) =
A^{x,y}_w(t)^\sigma -
B^{x,y}_w(t)^\sigma \cdot \xi_{g} 
\]
at $t = \det \xi_g$.
Using the uniqueness part of Theorem \ref{theorem:wiggle_normalform},
we now conclude the following.

\begin{proposition}[Symmetry of associated polynomials]
\label{prop:symmetry_of_associated_polynomials}
\[
A^{x, y}_w(t)^\sigma = A^{x^{-1}, y^{-1}}_w(t),
\quad
B^{x, y}_w(t)^\sigma = - B^{x^{-1}, y^{-1}}_w(t).
\]
\end{proposition}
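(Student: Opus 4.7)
The plan is straightforward because the proof is essentially a formal manipulation that exploits two facts: the matrix $\sigma$ acts by inversion on the diagonal torus, and the wiggle normal form given by Theorem \ref{theorem:wiggle_normalform} is unique.

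First I would establish the covariance identity $\wiggle^{x,y}_w(g^\sigma)^\sigma = \wiggle^{x^{-1},y^{-1}}_w(g)$ by a direct substitution: since $x^\sigma = x^{-1}$ and $y^\sigma = y^{-1}$ for $x,y \in \Torus_2(K)$, the inner conjugate $y^{g^\sigma}$ becomes $(y^\sigma)^g = (y^{-1})^g$ after conjugating the output by $\sigma$.

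Next I would compare $\xi_{g^\sigma}$ with $\xi_g^\sigma$. By Lemma \ref{lemma:xiproperties}(3), combined with the fact that $\sigma$-conjugation swaps the two standard diagonal idempotents and $C_g(\mathbb{1}) = 0$, a short computation yields $\xi_{g^\sigma} = -\xi_g^\sigma$. In particular $\det \xi_{g^\sigma} = \det \xi_g$, so applying Theorem \ref{theorem:wiggle_normalform} to both sides of the covariance identity and then conjugating the left side by $\sigma$ produces
\[
A^{x^{-1},y^{-1}}_w(t) + B^{x^{-1},y^{-1}}_w(t) \cdot \xi_g = A^{x,y}_w(t)^\sigma - B^{x,y}_w(t)^\sigma \cdot \xi_g
\]
at $t = \det \xi_g$, using that $\sigma$-conjugation is an algebra automorphism and that $(\xi_g^\sigma)^\sigma = \xi_g$.

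Finally, $\sigma$-conjugation preserves $\Diag_2(R_0)$ by swapping the two diagonal entries, so both sides of the last equality already sit in the normal form allowed by Theorem \ref{theorem:wiggle_normalform}. Uniqueness of the normal form then separates the $\mathbb{1}$-coefficient from the $\xi_g$-coefficient and yields the two claimed identities at once. The only subtlety to watch for is the sign in $\xi_{g^\sigma} = -\xi_g^\sigma$, which in turn is responsible for the sign in the $B$-identity; everything else is bookkeeping.
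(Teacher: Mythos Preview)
Your proposal is correct and follows essentially the same route as the paper: establish the covariance $\wiggle^{x,y}_w(g^\sigma)^\sigma = \wiggle^{x^{-1},y^{-1}}_w(g)$, use Lemma~\ref{lemma:xiproperties}(3) together with $C_g(\mathbb{1})=0$ to get $\xi_{g^\sigma} = -\xi_g^\sigma$ (whence $\det\xi_{g^\sigma}=\det\xi_g$), and then invoke the uniqueness part of Theorem~\ref{theorem:wiggle_normalform}. Your write-up is in fact slightly more explicit than the paper's in noting that $\sigma$-conjugation preserves $\Diag_2(R_0)$ so that uniqueness applies directly.
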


This symmetry allows us to reduce associated matrix polynomials to ordinary
polynomials. We will express this in terms of an extended action of $\sigma$.
First of all, $\sigma$ acts by conjugation on $\Mat_2(R_0)$. By letting $t^\sigma = t$,
we have an induced action of $\sigma$ on $\Mat_2(R)$. This action induces the
standard action on matrix functions. For example, for a polynomial with matrix 
parameters $\delta^{x,y}(t) \in R_0[t]$, we have 
$\delta^{x,y}(t)^\sigma = \delta^{x^\sigma, y^\sigma}(t)$.

\begin{corollary} \label{cor:matrix_polynomials_to_ordinary}
There exist polynomials $\alpha^{x,y}_w, \beta^{x,y}_w \in R_0[t]$ so that
\[
A^{x,y}_w(t) = 
\begin{pmatrix}
\alpha^{x,y}_w(t) & 0 \\[0.5em] 0 & \alpha^{x,y}_w(t)^\sigma
\end{pmatrix},
\quad
B^{x,y}_w(t) = 
\begin{pmatrix}
\beta^{x,y}_w(t) & 0 \\[0.5em] 0 & - \beta^{x,y}_w(t)^\sigma
\end{pmatrix}.
\]
\end{corollary}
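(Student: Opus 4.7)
The plan is to view this as a direct bookkeeping consequence of Proposition \ref{prop:symmetry_of_associated_polynomials}, combined with the elementary observation that $\sigma$-conjugation swaps the two diagonal entries of any diagonal $2\times 2$ matrix. Since $A^{x,y}_w(t) \in \Diag_2(R_0)[t]$, I would define $\alpha^{x,y}_w(t) \in R_0[t]$ to be its $(1,1)$-entry and let $\gamma^{x,y}_w(t)$ denote its $(2,2)$-entry; similarly for $B^{x,y}_w(t)$ I would let $\beta^{x,y}_w(t)$ denote the $(1,1)$-entry and $\delta^{x,y}_w(t)$ the $(2,2)$-entry. The entire task then reduces to showing that $\gamma^{x,y}_w(t) = \alpha^{x,y}_w(t)^\sigma$ and $\delta^{x,y}_w(t) = -\beta^{x,y}_w(t)^\sigma$.

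First I would unpack $\sigma$-conjugation on the diagonal matrix $A^{x,y}_w(t)$, which yields
\[
A^{x,y}_w(t)^\sigma = \begin{pmatrix} \gamma^{x,y}_w(t) & 0 \\ 0 & \alpha^{x,y}_w(t) \end{pmatrix}.
\]
Comparing $(1,1)$-entries with the identity $A^{x,y}_w(t)^\sigma = A^{x^{-1},y^{-1}}_w(t)$ from Proposition \ref{prop:symmetry_of_associated_polynomials} gives $\gamma^{x,y}_w(t) = \alpha^{x^{-1},y^{-1}}_w(t)$. The definition of the extended $\sigma$-action on polynomials with matrix parameters, namely $\alpha^{x,y}_w(t)^\sigma = \alpha^{x^\sigma, y^\sigma}_w(t)$, together with $x^\sigma = x^{-1}$ and $y^\sigma = y^{-1}$ for diagonal $x, y \in \Torus_2(K)$, identifies the right-hand side with $\alpha^{x,y}_w(t)^\sigma$, yielding the asserted shape for $A^{x,y}_w(t)$.

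The argument for $B^{x,y}_w(t)$ is entirely parallel, the only difference being the sign in the symmetry relation $B^{x,y}_w(t)^\sigma = -B^{x^{-1},y^{-1}}_w(t)$. Comparing $(1,1)$-entries in this identity produces $\delta^{x,y}_w(t) = -\beta^{x^{-1},y^{-1}}_w(t) = -\beta^{x,y}_w(t)^\sigma$, completing the proof. There is no serious obstacle here: all of the real work was already carried out in establishing Proposition \ref{prop:symmetry_of_associated_polynomials}, and the corollary is simply its translation from matrix polynomials to scalar polynomials, made possible by the diagonality of $A^{x,y}_w$ and $B^{x,y}_w$ guaranteed by Lemma \ref{lemma:wI} and Theorem \ref{theorem:wiggle_normalform}.
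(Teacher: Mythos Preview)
Your proof is correct and follows exactly the approach the paper intends: the corollary is stated without proof immediately after Proposition \ref{prop:symmetry_of_associated_polynomials}, and your argument is precisely the unpacking of that symmetry relation entry by entry, using the extended $\sigma$-action described just before the corollary. One purely cosmetic remark: your temporary name $\gamma^{x,y}_w(t)$ for the $(2,2)$-entry of $A^{x,y}_w(t)$ clashes with the paper's later definition $\gamma^{x,y}_w(t) = \alpha^{x,y}_w(t) + \beta^{x,y}_w(t)\cdot t$ in Corollary \ref{cor:shape_of_wiggle}, so you may want to pick a different placeholder symbol.
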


\begin{example}
Using Proposition \ref{prop:calculating_associated_polynomials}
and extracting only the upper-left entries, we can compute the
polynomials $\alpha$ and $\beta$ associated to the double commutator word
$w_{k,l,m,n}$.
Both are, in the general situation, of degree $6$.
We have 
\begin{align*}
\alpha^{x,y}_{w_{k,l,m,n}}(t) &= 
\frac{
( \lambda ^{2 k}-1 ) 
( \mu ^{2 l}-1 ) 
( \lambda ^{2 m}-1 ) 
( \mu ^{2 n}-1 ) 
}{ \lambda ^{4 (k+m)} \mu ^{4 (l+n)} }
\cdot
t ( t + 1 )
\cdot
\alpha(t) + 1, \\
\beta^{x,y}_{w_{k,l,m,n}}(t) &= 
\frac{
( \lambda ^{2 k}-1 ) 
( \mu ^{2 l}-1 ) 
( \lambda ^{2 m}-1 ) 
( \mu ^{2 n}-1 ) 
}{ \lambda ^{4 (k+m)} \mu ^{4 (l+n)} }
\cdot
t ( t + 1 )
\cdot
\beta(t),
\end{align*}
where 
$\alpha(t) = \sum_{i = 0}^4 a_i t^i$ 
and
$\beta(t) = \sum_{i = 0}^4 b_i t^i$.
The coefficients of $\alpha$ are
\begin{align*}

\begin{autobreak}
a_0 =
\lambda ^{2 (k+2 m)} \mu ^{2 (l+2 n)}-\lambda ^{2 (2 k+m)} \mu ^{2 (2 l+n)},
\end{autobreak}
\\
\begin{autobreak}
a_1 =
-\lambda ^{2 m} 
( \mu ^{2 (l+n)} 
( \mu ^{2 l}-1 ) 
( \mu ^{2 n}-1 ) \lambda ^{2 k}-\mu ^{2 l} 
( \mu ^{2 l}-1 ) 
( \mu ^{2 l}-3 \mu ^{2 n}
+\mu ^{4 n} ) \lambda ^{4 k}+\mu ^{2 l} 
( \mu ^{2 l}-1 ) 
( \mu ^{2 l}-\mu ^{2 n} ) \lambda ^{6 k}-\mu ^{4 n} 
( \mu ^{2 l}-1 )^2 \lambda ^{2 m}-\mu ^{4 n} 
( \mu ^{2 l}-1 ) \lambda ^{4 m}+
( \mu ^{4 n}+\mu ^{4 (l+n)}
+\mu ^{2 (2 l+n)}-3 \mu ^{2 (l+2 n)} ) 
\lambda^{2 (k+m)}+\mu ^{2 (l+n)} 
( -2 \mu ^{2 l}+\mu ^{2 n}+1 ) \lambda ^{2 (2 k+m)}+\mu ^{4 n} 
( \mu ^{2 l}-1 ) \lambda ^{2 (k+2 m)} ),
\end{autobreak}
\\
\begin{autobreak}
a_2 =
\lambda ^{2 m} 
( \lambda ^{2 k}-1 ) 
( \mu ^{2 l}-1 ) 
( -
( \mu ^{2 n}-1 ) 
( \mu ^{4 l}+\mu ^{2 n}
-3 \mu ^{2 (l+n)} ) \lambda ^{2 k}+
( \mu ^{2 l}-3 \mu ^{4 l}
-\mu ^{2 n}+3 \mu ^{2 (l+n)}
+\mu ^{2 (2 l+n)}-\mu ^{2 (l+2 n)} ) \lambda ^{4 k}-\mu ^{2 n} 
( \mu ^{2 l}-1 ) 
( -\mu ^{2 l}+3 \mu ^{2 n}-1 ) \lambda ^{2 m}+\mu ^{2 n} 
( \mu ^{2 l}-3 \mu ^{2 n}
+\mu ^{2 (l+n)}+1 ) \lambda ^{4 m}-\mu ^{2 l} 
( \mu ^{2 l}-1 ) 
( \mu ^{2 n}-1 ) \lambda ^{2 (k+m)}+
( \mu ^{2 l}-1 ) 
( \mu ^{2 l}-\mu ^{2 n} ) \lambda ^{2 (2 k+m)}-
( \mu ^{2 l}-\mu ^{2 n} ) 
( \mu ^{2 n}-1 ) \lambda ^{2 (k+2 m)} ),
\end{autobreak}
\\
\begin{autobreak}
a_3 =
-\lambda ^{2 m} 
( \lambda ^{2 k}-1 ) 
( \mu ^{2 l}-1 ) 
( -
( \mu ^{2 n}-1 ) 
( \mu ^{2 l}-2 \mu ^{4 l}
-2 \mu ^{2 n}+3 \mu ^{2 (l+n)} ) \lambda ^{2 k}-
( 2 \mu ^{2 l}-3 \mu ^{4 l}
-2 \mu ^{2 n}+\mu ^{4 n}
+2 \mu ^{2 (l+n)}+2 \mu ^{2 (2 l+n)}
-2 \mu ^{2 (l+2 n)} ) \lambda ^{4 k}+
( -\mu ^{2 l}+\mu ^{4 l}
+2 \mu ^{2 n}-3 \mu ^{4 n}
-2 \mu ^{2 (2 l+n)}+3 \mu ^{2 (l+2 n)} ) \lambda ^{2 m}+
( \mu ^{2 l}-2 \mu ^{2 n}
+3 \mu ^{4 n}-\mu ^{2 (l+n)}
+\mu ^{2 (2 l+n)}-2 \mu ^{2 (l+2 n)} ) \lambda ^{4 m}+\mu ^{2 l} 
( \mu ^{2 l}-1 ) 
( \mu ^{2 n}-1 ) \lambda ^{2 (k+m)}+
( \mu ^{2 l}-1 ) 
( \mu ^{2 l}-\mu ^{2 n} ) 
( \mu ^{2 n}-2 ) \lambda ^{2 (2 k+m)}+
( \mu ^{2 l}-2 ) 
( \mu ^{2 n}-1 ) 
( \mu ^{2 n}-\mu ^{2 l} ) \lambda ^{2 (k+2 m)} ), 
\end{autobreak}
\\
\begin{autobreak}
a_4 =
\lambda ^{2 m} 
( \lambda ^{2 k}-1 )^2 
( \lambda ^{2 m}-1 ) 
( \lambda ^{2 m}-\lambda ^{2 k} ) 
( \mu ^{2 l}-1 )^2 
( \mu ^{2 l}-\mu ^{2 n} ) 
( \mu ^{2 n}-1 ),
\end{autobreak}
\end{align*}
and expressions of the same form can be computed for $\beta$.
We remark that 
$\alpha(0), \alpha(-1)$ and
$\beta(0), \beta(-1)$
are all non-zero
for general $k,l,m,n$
and generic $\lambda, \mu$.
\end{example}

Using the symmetry under $\sigma$, we can now express the wiggle matrix as follows.

\begin{corollary} \label{cor:shape_of_wiggle}
Let $\gamma^{x,y}_w(t) = \alpha^{x,y}_w(t) + \beta^{x,y}_w(t) \cdot t \in R_0[t]$.
Then
\[
\wiggle^{x,y}_w(g) =
\begin{pmatrix} 
    \gamma^{x,y}_w(t) & \beta^{x,y}_w(t) \cdot p \\[0.5em]
    - \beta^{x,y}_w(t)^\sigma \cdot q & \gamma^{x,y}_w(t)^\sigma
\end{pmatrix}
\quad \text{at $t = \det \xi_g$},
\]
where 
$\xi_g = \left( \begin{smallmatrix} t & p \\ q & - t \end{smallmatrix} \right)$
as in Lemma \ref{lemma:xiproperties_formofxi}.
\end{corollary}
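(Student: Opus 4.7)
The plan is to combine the normal form from Theorem \ref{theorem:wiggle_normalform} with the explicit diagonal shapes provided by Corollary \ref{cor:matrix_polynomials_to_ordinary} and then carry out the $2 \times 2$ matrix arithmetic explicitly. This is essentially a direct calculation, so the task is mainly one of bookkeeping and checking that the $\sigma$-symmetry threads through correctly.

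First I would write, by Theorem \ref{theorem:wiggle_normalform},
\[
\wiggle^{x,y}_w(g) = A^{x,y}_w(t) + B^{x,y}_w(t) \cdot \xi_g
\quad \text{at } t = \det \xi_g,
\]
and substitute the expressions from Corollary \ref{cor:matrix_polynomials_to_ordinary}:
\[
A^{x,y}_w(t) =
\begin{pmatrix}
\alpha^{x,y}_w(t) & 0 \\[0.3em] 0 & \alpha^{x,y}_w(t)^\sigma
\end{pmatrix},
\qquad
B^{x,y}_w(t) =
\begin{pmatrix}
\beta^{x,y}_w(t) & 0 \\[0.3em] 0 & - \beta^{x,y}_w(t)^\sigma
\end{pmatrix}.
\]
Next, using the shape of $\xi_g$ from Lemma \ref{lemma:xiproperties_formofxi}, I would compute directly
\[
B^{x,y}_w(t) \cdot \xi_g
=
\begin{pmatrix}
\beta^{x,y}_w(t) & 0 \\[0.3em] 0 & - \beta^{x,y}_w(t)^\sigma
\end{pmatrix}
\begin{pmatrix} t & p \\ q & - t \end{pmatrix}
=
\begin{pmatrix}
\beta^{x,y}_w(t) \cdot t & \beta^{x,y}_w(t) \cdot p \\[0.3em]
- \beta^{x,y}_w(t)^\sigma \cdot q & \beta^{x,y}_w(t)^\sigma \cdot t
\end{pmatrix}.
\]

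Adding $A^{x,y}_w(t)$ to this matrix, the off-diagonal entries already match the claimed expression, and the $(1,1)$ entry becomes $\alpha^{x,y}_w(t) + \beta^{x,y}_w(t) \cdot t = \gamma^{x,y}_w(t)$. For the $(2,2)$ entry I would note that $\sigma$ fixes the variable $t$, so $(\alpha^{x,y}_w(t) + \beta^{x,y}_w(t) \cdot t)^\sigma = \alpha^{x,y}_w(t)^\sigma + \beta^{x,y}_w(t)^\sigma \cdot t = \gamma^{x,y}_w(t)^\sigma$, giving the desired form. No real obstacle is expected here; the only subtle point is to remember that $t$ is $\sigma$-invariant (as established in the paragraph preceding Corollary \ref{cor:matrix_polynomials_to_ordinary}), which is what makes $\gamma^{x,y}_w(t)^\sigma$ a legitimate shorthand for the $(2,2)$ entry.
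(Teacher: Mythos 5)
Your proof is correct and matches the paper's approach: the paper's own proof simply states that the result is "immediate by joining" Theorem \ref{theorem:wiggle_normalform} and Corollary \ref{cor:matrix_polynomials_to_ordinary}, and you have carried out exactly that matrix multiplication and addition. The one point worth spelling out, which you correctly note, is that $t^\sigma = t$ so that $(\alpha^{x,y}_w(t) + \beta^{x,y}_w(t) \cdot t)^\sigma = \gamma^{x,y}_w(t)^\sigma$ in the $(2,2)$ entry.
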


\begin{proof}
This is immediate by joining 
Theorem \ref{theorem:wiggle_normalform} and
Corollary \ref{cor:matrix_polynomials_to_ordinary}.
\end{proof}

\begin{example} \label{ex:computation_gamma_and_trace_for_double_commutator}
For the double commutator word $w_{k,l,m,n}$, we compute
\begin{align*}
\gamma^{x,y}_{w_{k,l,m,n}}(t) &=
\frac{
( \lambda ^{2 k}-1 ) 
( \mu ^{2 l}-1 ) 
( \lambda ^{2 m}-1 ) 
( \mu ^{2 n}-1 ) 
}{ \lambda ^{4 (k+m)} \mu ^{4 (l+n)} }
\cdot
t ( t + 1 )
\cdot
\gamma(t) + 1, \\
\tr \wiggle^{x,y}_{w_{k,l,m,n}}(g) &= 
\left(
\frac{
( \lambda ^{2 k}-1 )
( \mu ^{2 l}-1 )
( \lambda ^{2 m}-1 )
( \mu ^{2 n}-1 )
}{ \lambda ^{2 (k+m)} \mu ^{2 (l+n)} }
\right)^2
\cdot
t^2 ( t + 1 )^2
\cdot
\tau(t) + 2,
\end{align*}
where 
$\gamma(t) = \sum_{i = 0}^5 c_i t^i$
and
$\tau(t) = \sum_{i = 0}^3 t_i t^i$. 
The coefficients of $\tau$ are
\begin{align*}

\begin{autobreak}
t_0 =
( \lambda ^{2 k} \mu ^{2 l}-\lambda ^{2 m} \mu ^{2 n} )^2,
\end{autobreak}
\\
\begin{autobreak}
t_1 =
\mu ^{2 l} 
( \mu ^{2 l}-\mu ^{2 n} ) 
( \mu ^{2 n}-1 ) \lambda ^{2 k}-\mu ^{2 l} 
( -3 \mu ^{2 l}+\mu ^{2 n}+
\mu ^{2 (l+n)}+1 ) \lambda ^{4 k}+\mu ^{2 n} 
( \mu ^{2 l}-1 ) 
( \mu ^{2 n}-\mu ^{2 l} ) \lambda ^{2 m}-\mu ^{2 n} 
( \mu ^{2 l}-3 \mu ^{2 n}+\mu ^{2 (l+n)}+1 ) \lambda ^{4 m}+
( \mu ^{2 l}-1 ) 
( \mu ^{2 n}-1 ) 
( \mu ^{2 l}+\mu ^{2 n} ) \lambda ^{2 (k+m)}-
( \mu ^{2 l}-1 ) 
( \mu ^{2 l}-\mu ^{2 n} ) \lambda ^{2 (2 k+m)}+
( \mu ^{2 l}-\mu ^{2 n} ) 
( \mu ^{2 n}-1 ) \lambda ^{2 (k+2 m)},
\end{autobreak}
\\
\begin{autobreak}
t_2 =
( 2 \mu ^{2 l}-1 ) 
( \mu ^{2 l}-\mu ^{2 n} ) 
( \mu ^{2 n}-1 ) \lambda ^{2 k}+
( -2 \mu ^{2 l}+3 \mu ^{4 l}
+\mu ^{2 n}-\mu ^{2 (l+n)}
-2 \mu ^{2 (2 l+n)}+\mu ^{2 (l+2 n)} ) \lambda ^{4 k}+
( \mu ^{2 l}-1 ) 
( \mu ^{2 n}-\mu ^{2 l} ) 
( 2 \mu ^{2 n}-1 ) \lambda ^{2 m}+
( \mu ^{2 l}-2 \mu ^{2 n}
+3 \mu ^{4 n}-\mu ^{2 (l+n)}
+\mu ^{2 (2 l+n)}-2 \mu ^{2 (l+2 n)} ) \lambda ^{4 m}+
( \mu ^{2 l}-1 ) 
( \mu ^{2 n}-1 ) 
( \mu ^{2 l}+\mu ^{2 n} ) \lambda ^{2 (k+m)}+
( \mu ^{2 l}-1 ) 
( \mu ^{2 l}-\mu ^{2 n} ) 
( \mu ^{2 n}-2 ) \lambda ^{2 (2 k+m)}+
( \mu ^{2 l}-2 ) 
( \mu ^{2 n}-1 ) 
( \mu ^{2 n}-\mu ^{2 l} ) \lambda ^{2 (k+2 m)},
\end{autobreak}
\\
\begin{autobreak}
t_3 =
( \lambda ^{2 k}-1 ) 
( \lambda ^{2 k}-\lambda ^{2 m} ) 
( \lambda ^{2 m}-1 ) 
( \mu ^{2 l}-1 ) 
( \mu ^{2 l}-\mu ^{2 n} ) 
( \mu ^{2 n}-1 ).
\end{autobreak}
\end{align*}
\end{example}

In the next section, we will make use of the following
relationship between the polynomials $\gamma^{x,y}_w$ and $\beta^{x,y}_w$.

\begin{corollary} \label{cor:determinant_of_wiggle}
$
\gamma^{x,y}_w(t) \cdot \gamma^{x,y}_w(t)^\sigma -
\beta^{x,y}_w(t) \cdot \beta^{x,y}_w(t)^\sigma \cdot t(t+1) = 1.
$
\end{corollary}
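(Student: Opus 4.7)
The plan is to read off the determinant of the wiggle matrix directly from Corollary \ref{cor:shape_of_wiggle} and exploit the fact that $\wiggle^{x,y}_w(g) \in \SL_2(K)$ for every $g \in \SL_2(K)$, so its determinant must equal $1$.

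Concretely, by Corollary \ref{cor:shape_of_wiggle}, for any $g \in \SL_2(K)$ we have, at $t = \det \xi_g$,
\[
\det \wiggle^{x,y}_w(g)
=
\gamma^{x,y}_w(t) \cdot \gamma^{x,y}_w(t)^\sigma
-
\bigl( \beta^{x,y}_w(t) \cdot p \bigr)\bigl( - \beta^{x,y}_w(t)^\sigma \cdot q \bigr)
=
\gamma^{x,y}_w(t) \cdot \gamma^{x,y}_w(t)^\sigma
+
\beta^{x,y}_w(t) \cdot \beta^{x,y}_w(t)^\sigma \cdot pq.
\]
By Lemma \ref{lemma:xiproperties_formofxi} we have $pq = - t(t+1)$, so the above equals
$\gamma^{x,y}_w(t) \gamma^{x,y}_w(t)^\sigma - \beta^{x,y}_w(t) \beta^{x,y}_w(t)^\sigma \cdot t(t+1)$. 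Since $\wiggle^{x,y}_w(g) \in \SL_2(K)$, this value is $1$.

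To upgrade this numerical equality to the desired identity in $R_0[t]$, I would observe that as $g$ ranges over $\SL_2(K)$, the scalar $\det \xi_g = bc$ (see Remark \ref{remark:xig_explicit}) takes every value in $K$: given $t \in K$, pick $b, c \in K$ with $bc = t$ and then any $a, d \in K$ with $ad = 1 + t$. Hence the polynomial $\gamma^{x,y}_w(t) \gamma^{x,y}_w(t)^\sigma - \beta^{x,y}_w(t) \beta^{x,y}_w(t)^\sigma \cdot t(t+1) - 1 \in R_0[t]$ vanishes on the infinite set $K$ and is therefore zero, which gives the claimed identity.

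There is no serious obstacle here; the only subtlety is making sure the equality of values at $t = \det \xi_g$ yields a bona fide polynomial identity, which is handled by the surjectivity of $g \mapsto \det \xi_g$ just noted and the infiniteness of $K$.
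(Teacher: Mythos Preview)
Your proof is correct and follows exactly the same route as the paper: compute $\det \wiggle^{x,y}_w(g)$ from the explicit matrix in Corollary~\ref{cor:shape_of_wiggle}, substitute $pq = -t(t+1)$ from Lemma~\ref{lemma:xiproperties_formofxi}, and use $\det \wiggle^{x,y}_w(g) = 1$. Your extra paragraph upgrading the pointwise equality to a polynomial identity via surjectivity of $g \mapsto \det \xi_g$ simply makes explicit what the paper leaves implicit.
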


\begin{proof}
Use $\det \wiggle^{x,y}_w(g) = 1$ together with
Corollary \ref{cor:shape_of_wiggle} and 
Lemma \ref{lemma:xiproperties_formofxi}.
\end{proof}

\section{Finding non-trivial unipotents}
\label{sec:finding_unipotents}

\subsection{Reducing to one parameter}

Set
\[
\overline{\unipotents^{x,y}_w} = 
\{ g \in \SL_2(K)
\mid 
\wiggle^{x,y}_w(g) \text{ is a non-trivial unipotent}
\}.
\]
Our objective is to show that this is a non-empty set.
Since the wiggle can be expressed almost exclusively in terms of 
the parameter $t = \det \xi_g$ by Corollary \ref{cor:shape_of_wiggle},
we will rather work with a slight
modification of the set $\overline{\unipotents^{x,y}_w}$ defined
as follows:
\[
\unipotents^{x,y}_w = 
\left\{ t \in K - \{ 0, -1 \}
\;\middle|\;
\begin{array}{l}
    \gamma^{x,y}_w(t) + \gamma^{x,y}_w(t)^\sigma = 2 \text{ and } \\[1ex]
    \left( 
\beta^{x,y}_w(t) \neq 0
\text{ or }
\beta^{x,y}_w(t)^\sigma \neq 0
\right)
\end{array}
\right\}.
\]
We now explain the connection between these two sets in terms of
the map
\[
\xi \colon \SL_2(K) \to \Mat_2(K),
\quad
g \mapsto \xi_g.
\]

\begin{lemma}
$
(\det \! \circ \xi)^{-1} \left( \unipotents^{x,y}_w \right)
\subseteq
\overline{\unipotents^{x,y}_w}.
$
\end{lemma}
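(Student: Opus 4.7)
The plan is to take $g \in \SL_2(K)$ with $t := \det \xi_g \in \unipotents^{x,y}_w$ and verify directly that $\wiggle^{x,y}_w(g)$ is a non-trivial unipotent by unpacking the explicit $2 \times 2$ form provided by Corollary \ref{cor:shape_of_wiggle}. The two conditions to check are that the trace equals $2$ and that at least one off-diagonal entry is non-zero.

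First, I would write $\xi_g = \left( \begin{smallmatrix} t & p \\ q & -t \end{smallmatrix} \right)$ as in Lemma \ref{lemma:xiproperties_formofxi}, so that $pq = -t(t+1)$. By Corollary \ref{cor:shape_of_wiggle}, the trace of $\wiggle^{x,y}_w(g)$ equals $\gamma^{x,y}_w(t) + \gamma^{x,y}_w(t)^\sigma$, which is $2$ by the defining condition of $\unipotents^{x,y}_w$. Since $\wiggle^{x,y}_w(g) \in \SL_2(K)$ has determinant $1$, its characteristic polynomial must be $(X - 1)^2$, and so $\wiggle^{x,y}_w(g)$ is unipotent.

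Next, I would exclude the possibility that $\wiggle^{x,y}_w(g) = \mathbb{1}$. Under the trace-$2$, determinant-$1$ constraint this is equivalent to both off-diagonal entries vanishing. From Corollary \ref{cor:shape_of_wiggle} these entries are $\beta^{x,y}_w(t) \cdot p$ and $-\beta^{x,y}_w(t)^\sigma \cdot q$. The exclusion $t \notin \{0, -1\}$ built into the definition of $\unipotents^{x,y}_w$ forces $pq = -t(t+1) \neq 0$, so both $p$ and $q$ are non-zero; combined with the condition that at least one of $\beta^{x,y}_w(t)$, $\beta^{x,y}_w(t)^\sigma$ does not vanish, this yields a non-zero off-diagonal entry, and $\wiggle^{x,y}_w(g)$ is therefore a non-trivial unipotent.

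There is no real obstacle to overcome: the definition of $\unipotents^{x,y}_w$ has been engineered precisely to match the conditions coming out of Corollary \ref{cor:shape_of_wiggle}, with the exclusion of $t \in \{0, -1\}$ ensuring that $p$ and $q$ do not annihilate the off-diagonal coefficients. The argument is essentially a translation between the scalar-parameter description of $\unipotents^{x,y}_w$ and the matrix-form description of the wiggle map.
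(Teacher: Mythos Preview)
Your proof is correct and follows essentially the same approach as the paper: write $\xi_g$ in the form of Lemma~\ref{lemma:xiproperties_formofxi}, use Corollary~\ref{cor:shape_of_wiggle} to read off the trace and off-diagonal entries, and invoke $t \notin \{0,-1\}$ to ensure $p,q \neq 0$. The paper's proof is almost word-for-word the same, only omitting your explicit mention of the characteristic polynomial $(X-1)^2$ in favour of the equivalent statement that trace $2$ and $v \neq \mathbb{1}$ characterise non-trivial unipotents in $\SL_2(K)$.
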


\begin{proof}
Recall first that an element $v \in \SL_2(K)$ is a non-trivial unipotent if and
only if $\tr v = 2$ and $v \neq \mathbb{1}$.
Given $t \in \unipotents^{x,y}_w$, let $g \in \SL_2(K)$ be any element with
$\det \xi_g = t$. Note that after writing
$\xi_g = \left( \begin{smallmatrix} t & p \\ q & - t \end{smallmatrix} \right)$
as in Lemma \ref{lemma:xiproperties_formofxi},
we have $pq = - t(t+1) \neq 0$.
It then follows from Corollary \ref{cor:shape_of_wiggle} 
that $\tr \wiggle^{x,y}_w(g) = \gamma^{x,y}_w(t) + \gamma^{x,y}_w(t)^\sigma = 2$
and at least one of $\wiggle^{x,y}_w(g)^1_2$, $\wiggle^{x,y}_w(g)^2_1$ is non-zero.
Therefore $\wiggle^{x,y}_w(g)$ is a non-trivial unipotent in $\SL_2(K)$,
and so $g \in \overline{\unipotents^{x,y}_w}$.
\end{proof}

It follows from Remark \ref{remark:xig_explicit} that the composition map 
$\det \! \circ \xi \colon \SL_2(K) \to k$ is surjective. In order to find
a non-trivial unipotent, it will therefore suffice to show that the set
$\unipotents^{x,y}_w$ is non-empty for some $x,y \in \Torus_2(K)$.

\subsection{Reducing to one polynomial}

We will in fact show that the following subset of $\unipotents^{x,y}_w$ is 
non-empty.

\begin{lemma} \label{lemma:unipotents_only_in_terms_of_gamma}
$
\{
t \in K -  \{ 0, -1 \} \mid
\gamma^{x,y}_w(t) + \gamma^{x,y}_w(t)^\sigma = 2
\text{ and }
\gamma^{x,y}_w(t) \neq 1
\}
\subseteq
\unipotents^{x,y}_w.
$
\end{lemma}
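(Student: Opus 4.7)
The plan is to use the determinant identity from Corollary \ref{cor:determinant_of_wiggle} to convert the condition $\gamma^{x,y}_w(t) \neq 1$ into the required non-vanishing of the $\beta$-polynomials. The trace condition $\gamma^{x,y}_w(t) + \gamma^{x,y}_w(t)^\sigma = 2$ is already built into the definition of $\unipotents^{x,y}_w$, so the only work is to verify the second clause.

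First I would set $\gamma = \gamma^{x,y}_w(t)$ and $\gamma' = \gamma^{x,y}_w(t)^\sigma$, and likewise $\beta = \beta^{x,y}_w(t)$, $\beta' = \beta^{x,y}_w(t)^\sigma$. Argue by contrapositive: suppose $\beta = 0$ and $\beta' = 0$. Then the product $\beta \cdot \beta' \cdot t(t+1)$ vanishes, and Corollary \ref{cor:determinant_of_wiggle} collapses to $\gamma \cdot \gamma' = 1$. Together with the hypothesis $\gamma + \gamma' = 2$, this forces $\gamma$ and $\gamma'$ to be roots of $X^2 - 2X + 1 = (X - 1)^2$, so $\gamma = \gamma' = 1$, contradicting $\gamma \neq 1$.

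In fact the argument shows a little more: even assuming only that the product $\beta \cdot \beta'$ vanishes (rather than both factors), the same reasoning yields $\gamma = 1$. Hence, whenever $\gamma \neq 1$, both $\beta$ and $\beta'$ must be nonzero, and in particular the defining condition of $\unipotents^{x,y}_w$ is satisfied. Since $t \in K - \{0, -1\}$ and the trace equality hold by assumption, this places $t$ inside $\unipotents^{x,y}_w$, as required.

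I do not expect any obstacle here — the lemma is a clean algebraic consequence of the $\det = 1$ relation, and the only subtlety is noticing that the vanishing of either $\beta$ kills the entire product, so a single $\gamma$-condition is indeed enough to exclude the degenerate case.
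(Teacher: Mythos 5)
Your proposal is correct and takes essentially the same route as the paper: both combine Corollary \ref{cor:determinant_of_wiggle} with the hypothesis $\gamma^{x,y}_w(t) + \gamma^{x,y}_w(t)^\sigma = 2$ to show $\beta^{x,y}_w(t)\cdot\beta^{x,y}_w(t)^\sigma \neq 0$. The paper phrases it directly (rewriting $0 = \gamma(\gamma+\gamma'-2)$ as $(\gamma-1)^2 + \beta\beta' t(t+1)$), whereas you phrase it as a contrapositive via the factorization of $X^2 - 2X + 1$; the substance is the same.
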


\begin{proof}
Let $t \in$ LHS.
It follows from Corollary \ref{cor:determinant_of_wiggle} that
\[
0 = 
\gamma^{x,y}_w(t) \cdot 
\left(
\gamma^{x,y}_w(t) + \gamma^{x,y}_w(t)^\sigma - 2
\right) =
\left( \gamma^{x,y}_w(t) - 1 \right)^2 + \beta^{x,y}_w(t) \cdot \beta^{x,y}_w(t)^\sigma \cdot t(t+1).
\]
Hence $\beta^{x,y}_w(t) \cdot \beta^{x,y}_w(t)^\sigma \neq 0$, which implies that $t \in \unipotents^{x,y}_w$.
\end{proof}

\subsection{Finding suitable roots}

We now show that in the case of the double commutator word,
the subset appearing in Lemma \ref{lemma:unipotents_only_in_terms_of_gamma}
is indeed non-empty (apart from an exceptional case that we translate
to a non-exceptional case). 

\begin{proof}[Proof of Theorem \ref{theorem:double_commutators_are_surjective}]
It now suffices to prove that there exists a root
of the polynomial 
$\gamma^{x,y}_{w_{k,l,m,n}}(t) + \gamma^{x,y}_{w_{k,l,m,n}}(t)^\sigma - 2$ 
that is distinct from $0, -1$ 
and is not a root of $\gamma^{x,y}_{w_{k,l,m,n}}(t)$.
To this end, we use the computations 
from Example \ref{ex:computation_gamma_and_trace_for_double_commutator}.
It then suffices to show that $\tau(t)$ and $\gamma(t)$ have no common roots.
We verify this by computing their resultant:
\begin{align*}
\begin{autobreak}
\res(\tau(t), \gamma(t)) =
-\lambda ^{6 k+8 m}
\mu ^{8 l+6 n} 
( \lambda ^{2k}-1 )^3 
( \mu ^{2l}-1 )^4 
( \lambda ^{2m}-1 ) 
( \mu ^{2n}-1 )^6 
( \lambda ^{2k}-\lambda ^{2m} )^6 
( \mu ^{2l}-\mu ^{2n} )^3 
( \mu ^{2l} \lambda ^{2m}-\lambda ^{2k} \mu ^{2n} ) 
( \lambda ^{2m} \mu ^{2n}-\lambda ^{2k} \mu ^{2l} ) 
( (\mu^{2l} - \mu^{2n}) (\lambda^{2k + 2m} - 1 )
+ (\lambda^{2m} - \lambda^{2k}) (\mu^{2l + 2n} - 1 ) )^2.
\end{autobreak}
\end{align*}
Apart from exceptional cases for the values of $k,l,m,n$, this resultant
is non-zero for generic $\lambda, \mu$, thus proving our claim.
The exceptional cases are the cases when the double commutator word 
$[[x^k, y^l], [x^m, y^n]]$
is not trivial, yet the resultant is zero. By inspecting the factors of
the resultant, we see that the exceptional cases are:
\begin{enumerate}
\item $m = k$ and $n \neq l$.
In this case, the resultant vanishes because the degrees of the
polynomials $\tau$ and $\gamma$ both drop by $1$.
More precisely, in this case we have
\[
\tau(t) = 
-\lambda ^{2 k} ( \mu ^{2l}-\mu ^{2n} )^2
\left(t (\lambda ^{2 k}-1) - 1\right) 
\left(t (\lambda ^{2 k}- 1) + \lambda ^{2 k} \right),
\]
and evaluating $\gamma$ at the two roots of $\tau$ gives the 
generically non-zero values
\[
\left\{\frac{\lambda ^{6 k} \left(\lambda ^{2 k}+1\right) \mu ^{2 (2 l+n)} \left(\mu ^{2 n}-\mu ^{2 l}\right)}{\lambda ^{2 k}-1},-\frac{\lambda ^{6 k} \left(\lambda ^{2
   k}+1\right) \mu ^{2 n} \left(\mu ^{2 n}-\mu ^{2 l}\right)}{\lambda ^{2 k}-1}\right\}.
\]

\item $n = l$ and $m \neq k$.
In this case, the extraordinary event that $\tau$
divides $\gamma$ occurs.
We deal with this case in the following manner.
Using the fact that $[x, y^{-1}] = [y,x]^{x^{-1}}$,
we deduce that the double commutator words 
$[[x^k, y^l], [x^m, y^n]]$
and
$[[y^l, x^k], [y^n, x^m]]$
are conjugate in $\FF_2$.
Hence this case follows from having already proved
that $\image w_{k,l,k,n}$ contains a non-trivial unipotent.

\item $m = - k$ and $n = - l$.
In this case, the polynomial $\tau$ is equal to
\[
\frac{
(\lambda ^{4 k}-1) (\mu ^{4 l}-1)
}{
\lambda ^{4 k} \mu ^{4 l} 
}
\left(
t (\lambda ^{2 k}-1) (\mu ^{2 l}-1)
+ 
\lambda ^{2 k} \mu ^{2 l}+1
\right)^2
\left(
t + 
\frac{
(\lambda ^{2 k} \mu ^{2 l}-1)^2
}{
(\lambda ^{4 k}-1) (\mu ^{4 l}-1)
}
\right).
\]
The double root of $\tau$ is also a root of $\gamma$.
Evaluating $\gamma$ at the third root of $\tau$ gives, however, the generically non-zero value
\[
-\frac{\left(\lambda ^{2 k}+1\right) \left(\lambda ^{2k}-\mu ^{2l}\right) \left(\lambda ^{2k} \mu ^{2l}-1\right) \left(\lambda ^{2 k}+4 \lambda ^{2 k} \mu ^{2 l}+\lambda ^{2 k} \mu ^{4 l}+\lambda ^{4 k} \mu ^{2 l}+\mu ^{2 l}\right)}{
\lambda ^{4 k} 
   \left(\lambda ^{2k} - 1\right) \left(\mu ^{2 l}+1\right)^4}.
\]

\end{enumerate}
\end{proof}

\bibliography{refs}
\bibliographystyle{alpha}

\end{document}